\documentclass[12pt]{amsart}
\usepackage{amsfonts, amssymb, amsmath, amsthm, eucal, latexsym, array, pictex}
\usepackage{fullpage}
\usepackage{verbatim}

\oddsidemargin=-0.2cm
\evensidemargin=-0.2cm
\sloppy


\input{cyracc.def}


\newtheorem{thm}{Theorem}
\newtheorem{lm}[thm]{Lemma} 
\newtheorem{cl}[thm]{Corollary}
\newtheorem{prop}[thm]{Proposition}

\theoremstyle{remark}
\newtheorem{rmk}{Remark}

\theoremstyle{definition}

\newcommand{\gt}{\mathfrak}

\newcommand{\ff}{\mathbb F}

\newcommand{\Hom}{\mathrm{Hom}}

\renewcommand{\Im}{{\rm Im\,}}

\newcommand{\ad}{\mathrm{ad\,}}

\newcommand{\g}{\mathfrak g}



\newcommand{\esi}{\varepsilon}

\newcommand{\VV}{{\mathbb V}}

\renewcommand{\le}{\leqslant}
\renewcommand{\ge}{\geqslant}


\newfam\Bbbfam\newfam\eufam\newfam\eusfam%
\font\Bbbfont=msbm10 scaled 1200%
\font\Bbbsmallfont=msbm8%
\textfont\Bbbfam=\Bbbfont\scriptfont\Bbbfam=\Bbbsmallfont
\font\euzw=eufm10 scaled 1200%
\font\euac=eufm7 scaled 1200%
\font\euacc=eufm7 scaled 1000%
\textfont\eufam=\euzw\scriptfont\eufam=\euac%
\scriptscriptfont\eufam=\euacc%
\font\euszw=eusm10 scaled 1200%
\font\eusac=eusm7 scaled 1200%
\font\eusacc=eusm7 scaled 1000%
\textfont\eusfam=\euszw\scriptfont\eusfam=\eusac%
\scriptscriptfont\eusfam=\eusacc%
%
%
%


\newfam\eusfam%
\font\euszw=eusm10 scaled 1200%
\font\eusac=eusm7 scaled 1200%
\font\eusacc=eusm7 scaled 1000%
\textfont\eusfam=\euszw\scriptfont\eusfam=\eusac%
\scriptscriptfont\eusfam=\eusacc%
%

\begin{document}
\hfill {\scriptsize  March 2, 2010} 
\vskip1ex

\title[derived algebra]
{On the derived algebra of a centraliser}
\author{Oksana Yakimova}
\address{Emmy-Noether-Zentrum, Department Mathematik, 
Universit\"at Erlangen-N\"urnberg\\
Bismarckstrasse 1\,1/2,
91054 Erlangen  Germany}
\email{yakimova@mpim-bonn.mpg.de}
\keywords{Classical Lie algebras, nilpotent elements, centralisers}
\maketitle

\begin{abstract}
Let $\gt g$ be a classical Lie algebra, $e\in\gt g$ a nilpotent 
element and $\gt g_e\subset\gt g$ the centraliser of $e$.
We prove that $\gt g_e=[\gt g_e,\gt g_e]$  if and only if $e$ is rigid. 
It is also shown that if $e\in[\gt g_e,\gt g_e]$, then the nilpotent 
radical of $\gt g_e$ coincides with $[\gt g(1)_e,\gt g_e]$,
where $\gt g(1)_e\subset\gt g_e$ is an eigenspace 
of a characteristic of $e$ with the eigenvalue $1$. 
\end{abstract}

\section*{Introduction}

Let $\gt g$ be a semisimple (or reductive) Lie algebra over an 
algebraically closed field $\ff$ (${\rm char}\,\ff=0$) and $x\in\gt g$.
The main objects of our study here are the centraliser 
$\gt g_x$ and its derived algebra $[\gt g_x,\gt g_x]$.  
There are two natural questions:
for what elements $x$ we have $x\in[\gt g_x,\gt g_x]$ 
and a stronger one, when $\gt g_x=[\gt g_x,\gt g_x]$.
Evidently only a nilpotent element $x$ can satisfies any of these conditions.

Let $e\in\gt g$ be a nilpotent element. 
By the Jacobson-Morozov theorem it can be included 
into an $\gt{sl}_2$-triple
$\{e,h,f\}$ in $\gt g$. 
Set 
$\gt g(\lambda):=\{\xi\in\gt g\mid \ad(h){\cdot}\xi=\lambda\xi\}$ 
and $\gt g(\lambda)_e=\gt g(\lambda)\cap\gt g_e$.
Nilpotent 
elements $e$ such that $e\in[\gt g_e,\gt g_e]$
were studied in \cite{e-gr}, where they are called 
{\it compact}, and in \cite{Dima-reach}, where they are 
called {\it reachable}. 
Extending results of \cite{Dima-reach}, we show that 
if $e$ is reachable and $\gt g$ is a classical Lie algebra, 
then $\gt g(\lambda{+}1)_e=[\gt g(1)_e,\gt g(\lambda)_e]$ for 
all $\lambda$, see Theorem~\ref{g(i)}.

The irreducible components of the algebraic 
varieties $\gt g^{(m)}=\{\xi\in\gt g\mid \dim\gt g_\xi=m\}$ 
are called the {\it sheets} of $\gt g$. 
Their description was obtained by Borho and Kraft in 
\cite{Borho}, \cite{BrKr} in terms of the so-called parabolic 
induction. One of the basic results is that each 
sheet contains a unique nilpotent orbit. 
If a nilpotent orbit coincides with a sheet, it is 
said to be {\it rigid}.
A nilpotent element is said to be {\it rigid}, if its orbit is rigid. 
If $\gt g_e=[\gt g_e,\gt g_e]$, then $e$ is rigid by an almost trivial 
reason (see Proposition~\ref{rigid-1}). 
We prove that in the classical Lie algebras the converse 
is true. This answers  a question 
 put to me by A.\,Premet at the Ascona conference in August (2009). 
In the exceptional Lie algebras there are rigid elements 
such that $\gt g_e\ne [\gt g_e,\gt g_e]$, see Remark~\ref{G2}.
Interest in $[\gt g_e,\gt g_e]$ is motivated by a 
connection with finite $W$-algebras ${\bf U}(\gt g,e)$ 
and their $1$-dimensional 
representations. Recall that ${\bf U}(\gt g,e)$ is a deformations of 
the universal enveloping algebras ${\bf U}(\gt g_e)$ and 
the commutators $[\xi,\eta]$ of $\xi,\eta\in\gt g_e$ naturally 
appear in  the commutator relation of ${\bf U}(\gt g,e)$, 
see e.g. \cite[Section~3.4]{Sasha}. 
As was explained to me by A.\,Premet, the
equality $\gt g_e=[\gt g_e,\gt g_e]$ implies that 
${\bf U}(\gt g,e)$ has at most one non-trivial 
$1$-dimensional representation.

\section{Basis of a centraliser}\label{basis}

In this section
we fix a basis of a centraliser, 
which is used throughout  the paper, and state 
a few easy useful facts. 
Let $\VV$ be a  finite dimensional vector space over
$\ff$ and let $e$ be a nilpotent element of
$\hat{\g}=\gt{gl}(\VV)$. Let $k$ be the number of Jordan blocks of $e$
and $W\subseteq \VV$ a ($k$-dimensional) complement of $\Im e$ in
$\VV$. Let $d_i$ denote the size of the $i$th Jordan block of
$e$. We always assume that the Jordan blocks are ordered such that
$d_1\ge d_2\ge\ldots\ge d_k$ so 
that $e$ is represented by the partition $(d_1,\ldots,d_k)$
of $\dim\VV$. 
Choose a basis $w_1, w_2, \ldots, w_k$ 
in $W$ such that the vectors $e^{j}{\cdot}w_i$ with 
$1\le i\le k$, $0\le j\le d_i{-}1$ form a basis of $\VV$, and put
$\VV[i]:=\textrm{span}\{e^j{\cdot}w_i\,|\,\, j\ge 0\}$. Note that
$e^{d_i}{\cdot}w_i=0$ for all $i$. 

If $\xi\in\hat{\g}_e$, then $\xi(e^j{\cdot}w_i)=e^j{\cdot} \xi(w_i)$, hence
$\xi$ is completely determined by its values on $W$. 
The only restriction on $\xi(w_i)$ is that 
$e^{d_i}{\cdot}\xi(w_i)=\xi(e^{d_i}{\cdot}w_i)=0$. 
Since vectors $e^s{\cdot}w_i$ form a basis of $\VV$, the
centraliser $\hat{\gt g}_e$ has a basis 
$\{\xi_i^{j,s}\}$ such that
$$
\left\{
\begin{array}{l}
\xi_i^{j,s}(w_i)=e^s{\cdot}w_j, \\
\xi_i^{j,s}(w_t)=0 \enskip \mbox{for } t\ne i, \\
\end{array}\right.
\quad 1\le i,j\le k, \ \mbox{ and }\ \max\{d_j-d_i, 0\} \le s\le d_j{-}1 \ .
$$
It is convenient to assume that $\xi_i^{j,s}=0$ whenever 
$s$ does not satisfy the above restrictions. 
An example of $\xi_i^{j,1}$ with $i>j$ and $d_j=d_i{+}1$ is shown in 
Figure~1.

\begin{figure}[htb]
\setlength{\unitlength}{0.023in}
\begin{center}
\begin{picture}(90,85)(-9,-5)

\put(-9,6){$e\!\!:$}
\put(-2.7,5){\vector(0,1){10}}

\put(10,0){\line(1,0){20}}
\put(10,0){\line(0,1){70}}
\put(10,70){\line(1,0){20}}
\put(30,0){\line(0,1){70}}
\put(10,30){\line(1,0){20}}
\put(10,20){\line(1,0){20}}
\put(10,10){\line(1,0){20}}
\put(10,60){\line(1,0){20}}
\put(70,0){\line(0,1){60}}
\put(70,0){\line(1,0){20}}
\put(70,60){\line(1,0){20}}
\put(90,0){\line(0,1){60}}
\put(70,10){\line(1,0){20}}
\put(70,20){\line(1,0){20}}
\put(70,50){\line(1,0){20}}

\qbezier[20](20,32),(20,45),(20,58)
\qbezier[16](80,22),(80,35),(80,48)
\qbezier[15](50,23),(50,40),(50,57)

\put(70,05){\vector(-4,1){40}}                        
\put(70,15){\vector(-4,1){40}}  
\put(70,55){\vector(-4,1){40}}  

 \put(48,65){$\xi_i^{j,1}$}

{\small                                        
\put(11,63){$e^{d_j}{\cdot}w_j$}
\put(12.1,23){$e^2{\cdot}w_j$}
\put(13,13){$e{\cdot}w_{j}$}
\put(15.5,3){$w_{j}$}
\put(17.3,-5.5){$j$} 

\put(72,53){$e^{d_i}{\cdot}w_i$}
\put(74,13){$e{\cdot}w_{i}$}
\put(76.5,3){$w_{i}$}
\put(79,-5.5){$i$}  }
               

\end{picture}
\end{center}
\caption{}\label{pikcha_A}
\end{figure}

The composition rule shows that the basis elements 
$\xi_i^{j,s}$ satisfy the following commutator relation: \\
\begin{equation}\label{commutator}
[\xi_i^{j,s},\xi_p^{q,t}]=\delta_{q,i}\xi_p^{j,t+s}-\delta_{j,p}\xi_i^{q,s+t},
\end{equation}
where $\delta_{i,j}=1$ if $i=j$ and is zero otherwise. 

An $\gt{sl}_2$-triple $\{e,h,f\}$ 
can be chosen in such a way that 
$h{\cdot}w_i=(1-d_i)w_i$.
Then 
\begin{equation}\label{ad-h}
[h,\xi_i^{j,s}]=(d_i-d_j)+2s.
\end{equation} 
Using this equality, 
it is not difficult to describe $h$-eigenspaces 
$\gt g(\lambda)$ in terms of $\xi_i^{j,s}$.
For example,
$\gt g(1)_e$ is generated by 
$\xi_i^{j,0}$ with $d_j=d_i-1$ and $\xi_i^{j,1}$ with $d_j=d_i+1$. 

\vskip1ex

Let $(\ \,,\ )_{{_\VV}}$ be a non-degenerate symmetric or
skew-symmetric bilinear form on $\VV$, i.e., 
$(v,w)_{{_\VV}}=\esi(w,v)_{{_\VV}}$, where $v,w\in\VV$ and $\esi=+1$ or $-1$.
Let $\sigma:\hat{\g}\to\hat{\g}$ be a linear mapping such that 
$(x{\cdot}v,w)_{{_\VV}} =-(v, \sigma(x){\cdot}w)_{{_\VV}}$
for all $v,w\in\VV$ and $x\in\hat{\g}$. Then 
$\sigma$ in an involutive automorphism of $\hat{\g}$.
Let
$\hat{\g}=\,\hat{\g}_0\oplus\gt m$ be the
symmetric decomposition of $\hat{\g}$ 
corresponding to the $\sigma$-eigenvalues $\pm 1$. 
The elements $x\in\gt m$ have the property
that $(x{\cdot}v,w)_{{_\VV}} =(v, x{\cdot}w)_{{_\VV}}$ for all $v,w\in \VV$.

Set $\g:=\hat{\g}_0$ and let $e$ be a nilpotent element of
$\g$. Since $\sigma(e)=e$, the centraliser $\hat{\g}_e$ of
$e$ in $\hat{\g}$ is $\sigma$-stable and
$(\hat{\g}_e)_0=\,\hat{\g}_e^{\sigma}=\,\g_e$. This
yields the $\gt g_e$-invariant symmetric decomposition
$\hat{\gt g}_e=\gt g_e\oplus\gt m_e$.

\begin{lm}   \label{restr}
In the above setting, suppose that $e\in\hat{\gt g}_0$ is a nilpotent element.
Then the cyclic vectors $\{w_i\}$ and thereby the spaces $\{\VV[i]\}$ can be chosen
such that there is an involution $i\mapsto i'$ on the set
$\{1,\dots, k\}$ satisfying the following conditions:  
\begin{itemize}
\item $d_i=d_{i'}$; 
\item $(\VV[i], \VV[j])_{{_\VV}}=0$ if $i\ne j'$;
\item $i=i'$ if and only if $(-1)^{d_i}\esi=-1$.
\end{itemize}
\end{lm}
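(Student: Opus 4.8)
The plan is to argue by induction on $\dim\VV$, peeling off at each stage an $e$-stable subspace on which the form is non-degenerate and which realises either one self-paired block or a mutually dual pair of blocks of the current maximal size. The induction is legitimate because $e$ is skew-adjoint: if $U$ is $e$-stable and $(\,,\,)_{{_\VV}}$ is non-degenerate on $U$, then for $x\in U^{\perp}$ and $u\in U$ one has $(e{\cdot}x,u)_{{_\VV}}=-(x,e{\cdot}u)_{{_\VV}}=0$, so $U^{\perp}$ is again $e$-stable and carries a non-degenerate form, with $e|_{U^{\perp}}$ nilpotent and skew-adjoint.

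First I would record the basic identity coming from $\sigma(e)=e$, namely $(e^{a}{\cdot}u,e^{b}{\cdot}v)_{{_\VV}}=(-1)^{a}(u,e^{a+b}{\cdot}v)_{{_\VV}}$. Writing $n=d_1$ for the largest block size, I set $b(u,v):=(u,e^{n-1}{\cdot}v)_{{_\VV}}$; the identity gives $b(v,u)=\esi(-1)^{n-1}b(u,v)$, so $b$ is symmetric when $(-1)^{n}\esi=-1$ and skew-symmetric when $(-1)^{n}\esi=+1$. Using the identity once more together with non-degeneracy of $(\,,\,)_{{_\VV}}$ one checks $\Im(e^{n-1})^{\perp}=\Ker(e^{n-1})$, so the radical of $b$ is $\Ker(e^{n-1})$ and $b$ descends to a non-degenerate form $\bar b$ on $\VV/\Ker(e^{n-1})$, a space whose dimension equals the number $r$ of blocks of size $n$.

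In the case $(-1)^{n}\esi=-1$ the form $\bar b$ is symmetric and non-degenerate, so over the algebraically closed field $\ff$ there is a vector with $\bar b(\bar w,\bar w)\neq 0$; any lift $w$ satisfies $e^{n-1}{\cdot}w\neq 0$, hence generates a block of size $n$, and $(w,e^{n-1}{\cdot}w)_{{_\VV}}\neq 0$ forces the (anti-triangular) Gram matrix of $\VV[w]$ to be non-degenerate. I split off this single self-paired block ($i=i'$) and recurse. In the case $(-1)^{n}\esi=+1$ the form $\bar b$ is skew and non-degenerate, so $r$ is even and I may choose $\bar w,\bar w'$ with $\bar b(\bar w,\bar w')\neq 0$; lifting to cyclic generators $w,w'$ of size-$n$ blocks, the non-vanishing of the leading cross-pairing $(w,e^{n-1}{\cdot}w')_{{_\VV}}$ shows $\VV[w]\cap\VV[w']=0$ and that $(\,,\,)_{{_\VV}}$ is non-degenerate on $U:=\VV[w]\oplus\VV[w']$; moreover $(w,e^{n-1}{\cdot}w)_{{_\VV}}=\bar b(\bar w,\bar w)=0$ and likewise for $w'$, so the top self-pairings already vanish.

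The main obstacle is the remaining part of this last case: making $\VV[w]$ and $\VV[w']$ individually isotropic, as required by the condition $(\VV[i],\VV[i])_{{_\VV}}=0$ for $i\neq i'$. The lower self-pairings $(w,e^{m}{\cdot}w)_{{_\VV}}$ with $m<n-1$ need not vanish, and they are unaffected by any internal change $w\mapsto w+\sum_{j\ge1}a_j e^{j}{\cdot}w$; I must instead exploit the non-degenerate leading cross-pairing to run a triangular, Witt-type adjustment $w\mapsto w+\sum_j s_j e^{j}{\cdot}w'$ and $w'\mapsto w'+\sum_j t_j e^{j}{\cdot}w$, clearing the self-pairings one degree at a time from the top down while preserving the duality between the two blocks. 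Once $U$ is so arranged, I split it off as a dual pair ($i\neq i'$) and recurse on $U^{\perp}$. Collecting the summands produced at every stage gives the vectors $\{w_i\}$, the spaces $\{\VV[i]\}$ and the involution $i\mapsto i'$; the equality $d_i=d_{i'}$ and the parity dichotomy hold by construction, while $(\VV[i],\VV[j])_{{_\VV}}=0$ for $i\neq j'$ follows from orthogonality of distinct summands together with the isotropy arranged inside each paired summand.
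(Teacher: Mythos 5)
Your argument is correct, but note that the paper does not actually prove this lemma: it simply cites it as a standard normal form for nilpotent orbits in $\gt{so}(\VV)$ and $\gt{sp}(\VV)$, referring to \cite[Sect.~5.1]{cm} and \cite[Sect.~1]{ja}. What you have written is essentially the classical induction found in those references: split off, at each step, either one anisotropic block or a hyperbolic pair of blocks of maximal size $n$, using the descended form $\bar b$ on $\VV/\Ker(e^{n-1})$, whose symmetry type $(-1)^{n-1}\esi$ governs the dichotomy $i=i'$ versus $i\ne i'$. The one step you rightly single out as the main obstacle --- making $\VV[w]$ and $\VV[w']$ individually isotropic in the hyperbolic case --- does go through as you sketch, and it is worth recording why: setting $c_m=(w,e^{m}{\cdot}w)_{{_\VV}}$, the identity $(e^{a}{\cdot}w,e^{b}{\cdot}w)_{{_\VV}}=(-1)^{a}c_{a+b}=\esi(-1)^{b}c_{a+b}$ forces $c_m=0$ unless $(-1)^{m}=\esi$, and the substitution $w\mapsto w+s\,e^{\,n-1-m}{\cdot}w'$ changes $c_m$ by $s\bigl(1+\esi(-1)^{m}\bigr)(w,e^{n-1}{\cdot}w')_{{_\VV}}$ plus a quadratic term that vanishes because $2n-2-m\ge n$; the linear coefficient is $2(w,e^{n-1}{\cdot}w')_{{_\VV}}\ne 0$ precisely for those $m$ where $c_m$ can be nonzero, while all $c_{m'}$ with $m'>m$ and the leading cross-pairing $(w,e^{n-1}{\cdot}w')_{{_\VV}}$ are untouched, so the top-down clearing terminates. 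Your proof is therefore a complete, self-contained replacement for the citation; the paper's choice merely buys brevity by outsourcing exactly this bookkeeping.
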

\begin{proof} 
This is  a standard property of the nilpotent orbits in $\gt{sp}(\VV)$ and
$\gt{so}(\VV)$, see, for example, \cite[Sect.~5.1]{cm} or \cite[Sect.~1]{ja}.
\end{proof}

\subsection{Basis in the orthogonal and symplectic cases.}\label{basis-2}
Let $\{w_i\}$ be a set of cyclic vectors 
chosen according to Lemma~\ref{restr}. Consider the restriction of 
the $\gt g$-invariant form $(\ \,,\ )_{{_\VV}}$  to $\VV[i]+\VV[i']$. 
Since $(w,e^s{\cdot}v)_{{_\VV}}=(-1)^s(e^s{\cdot}w,v)_{{_\VV}}$, 
a vector $e^{d_i{-}1}{\cdot}w_i$ is orthogonal to all 
vectors $e^s{\cdot}w_{i'}$ with $s>0$. Therefore 
$(w_{i'},e^{d_i{-}1}{\cdot}w_i)_{{_\VV}}=(-1)^{d_i{-}1}(e^{d_i{-}1}{\cdot}w_{i'},w_i)_{{_\VV}}\ne 0$.
There is a (unique up to a scalar)  vector $v\in\VV[i]$ such that 
$(v,e^s{\cdot}w_{i'})_{{_\VV}}=0$ for all $s<d_i{-}1$. It is not contained  in 
$\Im e$, otherwise it would be orthogonal to $e^{d_i{-}1}{\cdot}w_{i'}$ too and hence 
to $\VV[i']$. Therefore there is no harm in replacing $w_i$ by $v$. 
Let us always choose the cyclic vectors $w_i$ in such a way 
that $(w_i,e^s{\cdot}w_{i'})_{{_\VV}}=0$ for $s<d_i{-}1$ and 
normalise  them according to: 
\begin{equation}\label{normalise}
(w_i,e^{d_i{-}1}{\cdot}w_{i'})_{{_\VV}}=\pm 1 \ \text{ and } \
(w_i,e^{d_i{-}1}{\cdot}w_{i'})_{{_\VV}}>0 \ \text{ if } \ i\le i'. 
\end{equation}
Then $\gt g_e$ is generated (as a vector
space) by the vectors
$\xi_i^{j,d_j-s}+\varepsilon(i,j,s)\xi_{j'}^{i',d_i-s}$,
where $\varepsilon(i,j,s)=\pm 1$ depending on $i,j$ and $s$ in 
the following way
$$
(e^{d_j-s}{\cdot}w_j,e^{s{-}1}{\cdot}w_{j'})_{{_\VV}}=-\esi(i,j,s)(w_i,e^{d_i{-}1}{\cdot}w_{i'})_{{_\VV}}.
$$
Elements $\xi_i^{j,d_j-s}-\varepsilon(i,j,s)\xi_{j'}^{i',d_i-s}$ form a basis 
of $\gt m_e$. In the following we always 
normalise $w_i$ as above and 
enumerate the Jordan blocks such that 
$i'\in\{i,i+1,i-1\}$ keeping inequalities $d_i\ge d_j$ for $i<j$.
In this basis $\{e^s{\cdot}w_i\}$
the matrix of the restriction of the Killing form to 
$\VV[i]+\VV[i']$ is anti-diagonal with entries $\pm 1$. 

\section{Reachable nilpotent elements}

Let $\gt g$ be a reductive Lie algebra and 
$e\in\gt g$ a nilpotent element. 
We include it into an
$\gt{sl}_2$-triple $\{ e,h,f\}$
and let $\gt g(\lambda)$ stand for the 
$\ad(h)$-eigenspace with eigenvalue $\lambda$.
Set $\gt g(\lambda)_x:=\gt g_x\cap\gt g(\lambda)$.
We fix  a non-degenerate invariant 
bilinear form $\kappa$ on $\gt g$.

In \cite{Dima-reach} an element $x\in\gt g$ 
is called reachable if 
$x\in[\gt g_x,\gt g_x]$. Clearly each 
reachable element is nilpotent. 
Since $\gt g(0)_e$ is a reductive subalgebra, 
the representation of  $\gt g(0)_e$ on $\gt g(1)_e$
is completely reducible. Therefore
$e\in[\gt g_e,\gt g_e]$ if and only if 
$e\in[\gt g(1)_e,\gt g(1)_e]$.
Following \cite{Dima-reach},
we continue to study the derived algebra 
of a centraliser $\gt g_e$ for reachable $e$.

\begin{lm}\label{f}
Let $\hat f(\xi,\eta)=\kappa(f,[\xi,\eta])$ be a 
skew-symmetric form on $\gt g(1)$. Then 
$\hat f$ is non-degenerate on $\gt g(1)_e$. 
\end{lm}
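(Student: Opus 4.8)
The plan is to exploit the $\mathfrak{sl}_2$-representation theory together with the non-degeneracy of the invariant form $\kappa$. The key observation is that $\hat f(\xi,\eta)=\kappa(f,[\xi,\eta])=\kappa([f,\xi],\eta)$ by the invariance of $\kappa$. So $\hat f(\xi,\eta)=0$ for all $\eta\in\gt g(1)_e$ precisely when $[f,\xi]$ is $\kappa$-orthogonal to $\gt g(1)_e$. The strategy is therefore to identify the kernel of $\hat f$ on $\gt g(1)_e$ with those $\xi$ for which $[f,\xi]$ is orthogonal to $\gt g(1)_e$, and then to show this forces $\xi=0$.

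First I would recall the standard fact that $\ad(f)\colon\gt g(1)\to\gt g(-1)$ is an isomorphism; this follows from $\mathfrak{sl}_2$-theory, since on any irreducible $\mathfrak{sl}_2$-submodule $\ad(f)$ maps the weight-$1$ space isomorphically onto the weight-$(-1)$ space (there are no weight-$1$ vectors in a trivial or weight-$0$-only module contributing, and the lowering operator is injective on positive weight spaces). Next, I would use the pairing induced by $\kappa$ between $\gt g(\lambda)$ and $\gt g(-\lambda)$: since $\ad(h)$ acts as $\lambda$ on $\gt g(\lambda)$ and $\kappa$ is invariant, $\kappa$ pairs $\gt g(\lambda)$ non-degenerately with $\gt g(-\lambda)$ and annihilates $\gt g(\lambda)\times\gt g(\mu)$ for $\mu\ne-\lambda$. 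The crucial refinement is to restrict this to centralisers: I expect that $\kappa$ induces a non-degenerate pairing between $\gt g(\lambda)_e$ and $\gt g(-\lambda)_f$ (the $f$-centraliser in the opposite graded piece), using that $\gt g=\gt g_e\oplus[f,\gt g]$ is the decomposition coming from the $\mathfrak{sl}_2$-module structure, and that $\kappa(\gt g_e,[f,\gt g])$ behaves compatibly with the grading.

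The heart of the argument is then to combine these two ingredients. Suppose $\xi\in\gt g(1)_e$ lies in the kernel of $\hat f$, so $[f,\xi]\perp\gt g(1)_e$ with respect to $\kappa$. Since $\ad(f)$ carries $\gt g(1)_e$ into $\gt g(-1)_f$ (lowering the weight by $2$ and preserving the relevant centraliser structure because $f$ commutes with $\ad(f)$), and since $\kappa$ pairs $\gt g(1)_e$ with $\gt g(-1)_f$ non-degenerately, the orthogonality of $[f,\xi]$ to all of $\gt g(1)_e$ would force $[f,\xi]=0$. But $\ad(f)$ is injective on $\gt g(1)$, hence on $\gt g(1)_e$, so $\xi=0$. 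This shows the radical of $\hat f$ on $\gt g(1)_e$ is trivial, which is exactly non-degeneracy.

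The main obstacle I anticipate is justifying that $\kappa$ restricts to a \emph{non-degenerate} pairing between $\gt g(1)_e$ and $\ad(f)\bigl(\gt g(1)_e\bigr)\subseteq\gt g(-1)$, rather than merely between the full spaces $\gt g(1)$ and $\gt g(-1)$. One must verify that the image $\ad(f)\bigl(\gt g(1)_e\bigr)$ pairs non-trivially against $\gt g(1)_e$ itself, not just against some complementary piece of $\gt g(1)$; equivalently, that $\kappa([f,\xi],\xi)$ cannot vanish for all $\xi$ in a nontrivial subspace. I would handle this by decomposing $\gt g$ into irreducible $\mathfrak{sl}_2$-modules and checking the claim module by module: on each irreducible summand the highest-weight vector generates $\gt g_e$, the lowest-weight vector generates $\gt g_f$, and $\kappa$ pairs them, so the computation reduces to an explicit and transparent check inside a single $\mathfrak{sl}_2$-string. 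This reduction to the irreducible case is what makes the non-degeneracy transparent and is the step I would write out most carefully.
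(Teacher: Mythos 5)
Your proposal is correct and follows essentially the same route as the paper: reduce via invariance of $\kappa$ to the non-degenerate pairing between $\gt g(1)_e$ and $\gt g(-1)_f=[f,\gt g(1)_e]$, established by decomposing $\gt g$ into irreducible $\gt{sl}_2$-modules and using that summands of different dimensions are $\kappa$-orthogonal. (Only your parenthetical reason for $[f,\gt g(1)_e]\subseteq\gt g_f$ is off --- the correct reason is that a weight-$1$ highest-weight vector spans a two-dimensional string, so $(\ad f)^2$ kills it --- but the module-by-module check you propose at the end supplies exactly this.)
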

\begin{proof}
If $\gt a_1,\gt a_2\subset\gt g$ are two irreducible
representations of any subalgebra $\gt{sl}_2\subset\gt g$ and 
$\dim\gt a_1\ne\dim\gt a_2$, then 
necessarily $\kappa(\gt a_1,\gt a_2)=0$. 
Applying this to the $\gt{sl}_2$-triple $\{e,h,f\}$ we get
that  
$\kappa$
defines a non-degenerate pairing 
between $\gt g(-1)_f$ and $\gt g(1)_e$. 
It remains to notice that
$\kappa(f,[\gt g(1)_e,\gt g(1)_e])=\kappa([f,\gt g(1)_e],\gt g(1)_e)=
\kappa(\gt g(-1)_f,\gt g(1)_e)$. 
\end{proof}

\begin{rmk}
In the following we need only the fact that $\hat f$ is non-zero 
on $\gt g(1)_e$. But a proof of the weaker statement is not any easier. 
\end{rmk}

Suppose that $e\in\hat{\gt g}=\gt{gl}(\VV)$ 
is given by a partition $((d+1)^m,d^n)$ with both 
$m$ and $n$ being non-zero. 
Then $\hat{\gt g}(0)_e=\gt{gl}_m\oplus\gt{gl}_n$ by \cite[Sect.~3]{ja}.
One can easily compute that 
$$
\hat{\gt g}(1)_e \cong \ff^m\otimes(\ff^n)^*\oplus(\ff^m)^*\otimes\ff^n \enskip
\text{ and } \ \  \ 
\hat{\gt g}(2)_e\cong \gt{gl}_m\oplus\gt{gl}_n
$$
as $\hat{\gt g}(0)_e$-modules.
Let $e=e_{d+1}+e_{d}$ be a decomposition of $e$ 
according to the size of Jordan blocks, 
i.e., $e_d$ is given by the rectangular partition 
$(d^n)$ and $e_{d+1}$ by $((d+1)^m)$. 
Here $e_d{\cdot}w_i=0$, if $w_i$ generates a Jordan 
block of size $d+1$, and $e_{d+1}{\cdot}w_j=0$, if $w_j$ 
generates a Jordan block of size $d$. 
As a representation of $\gt g(0)_e$
the subspace $\hat{\gt g}(2)_e$ decomposes as 
$\gt{sl}_m\oplus\gt{sl}_l\oplus\ff e_d \oplus \ff e_{d+1}$, where 
$[\hat{\gt g}(0)_e,e_d]=[\hat{\gt g}(0)_e,e_{d+1}]=0$.

\begin{lm}\label{2-blocks}
Keep the above assumptions and notation. 
Then 
$[\hat{\gt g}(1)_e,\hat{\gt g}(1)_e]=\gt{sl}_m\oplus\gt{sl}_n\oplus\ff(me_1-ne_2)$
as a $\hat{\gt g}(0)_e$-module.
\end{lm}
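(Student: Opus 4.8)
The plan is to work entirely inside the explicit basis $\{\xi_i^{j,s}\}$ and use the commutator relation~\eqref{commutator}. Since $e$ is given by the partition $((d{+}1)^m,d^n)$, enumerate the Jordan blocks so that indices $i\in\{1,\dots,m\}$ correspond to blocks of size $d{+}1$ (so $d_i=d{+}1$) and indices $i\in\{m{+}1,\dots,m{+}n\}$ to blocks of size $d$ (so $d_i=d$). By the description following~\eqref{ad-h}, the space $\hat{\gt g}(1)_e$ is spanned by the elements with $[h,\xi_i^{j,s}]=(d_i-d_j)+2s=1$: explicitly by $\xi_i^{j,0}$ with $d_j=d_i-1$ (that is $i\le m<j$, giving the summand $(\ff^m)^*\otimes\ff^n$) and by $\xi_i^{j,1}$ with $d_j=d_i+1$ (that is $j\le m<i$, giving $\ff^m\otimes(\ff^n)^*$). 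The first step is therefore to list these two families of generators precisely.

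\textbf{Reducing to four types of brackets.}
Next I would compute $[\hat{\gt g}(1)_e,\hat{\gt g}(1)_e]$ by taking brackets of pairs drawn from these two families. Using~\eqref{commutator}, $[\xi_i^{j,s},\xi_p^{q,t}]=\delta_{q,i}\xi_p^{j,s+t}-\delta_{j,p}\xi_i^{q,s+t}$, there are three essentially distinct cases. Bracketing two generators $\xi_i^{j,0}$ (with $i,p\le m<j,q$) against each other produces elements $\xi_p^{j,0}$ and $\xi_i^{q,0}$ that have $h$-weight $-1$, landing in $\hat{\gt g}(-2)_e$ after re-examination—so I must be careful that the total $h$-weight is $2$, forcing me to pair one generator from each family. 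The productive brackets are $[\xi_i^{j,0},\xi_p^{q,1}]$ with $i\le m<j$ and $q\le m<p$; these yield $\delta_{q,i}\xi_p^{j,1}-\delta_{j,p}\xi_i^{q,1}$, i.e. combinations of the weight-$2$ generators $\xi_p^{j,1}$ ($q\le m<p$, $j>m$) and $\xi_i^{q,1}$. Collecting these I recover exactly the four matrix blocks spanning $\hat{\gt g}(2)_e\cong\gt{gl}_m\oplus\gt{gl}_n$, where the relevant entries are indexed by $\delta_{q,i}$ (a $\gt{gl}_n$-type index among the size-$d$ blocks) and $\delta_{j,p}$ (a $\gt{gl}_m$-type index among the size-$(d{+}1)$ blocks).

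\textbf{Identifying the image as $\gt{sl}_m\oplus\gt{sl}_n\oplus\ff(me_1-ne_2)$.}
The heart of the argument is to see which elements of $\hat{\gt g}(2)_e\cong\gt{gl}_m\oplus\gt{gl}_n$ are hit. Writing a general bracket as $\sum c_{i}^{j}[\xi_i^{j,0},\xi_{\,\cdot}^{\,\cdot,1}]$, the output in the $\gt{gl}_n$-factor always comes with a trace-zero constraint coming from the $\delta_{q,i}$ term and likewise for $\gt{gl}_m$, so the images land in $\gt{sl}_m\oplus\gt{sl}_n$ plus possibly the two central lines $\ff e_d$ and $\ff e_{d{+}1}$ (which here play the role of $e_1,e_2$). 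The key identity to extract is the single scalar combination of traces that survives: each bracket $[\xi_i^{j,0},\xi_j^{i,1}]$ contributes $\xi_j^{j,1}-\xi_i^{i,1}$, and summing over the diagonal shows the central contribution is proportional to $n\cdot(\text{central part of }\gt{gl}_m)-m\cdot(\text{central part of }\gt{gl}_n)$, which in the $e_d,e_{d{+}1}$ normalisation is exactly the line $\ff(me_1-ne_2)$. I expect the main obstacle to be bookkeeping: getting the two trace conditions right and pinning down the precise scalar $me_1-ne_2$ (rather than, say, $ne_1-me_2$) requires tracking the asymmetry between the $\delta_{q,i}$ and $\delta_{j,p}$ terms in~\eqref{commutator} and the differing numbers $m$ versus $n$ of each block type. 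Once the two independent trace relations are verified, a dimension count confirms that no further elements of $\hat{\gt g}(2)_e$ are reached, completing the identification.
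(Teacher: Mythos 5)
Your route is genuinely different from the paper's and, once the bookkeeping is straightened out, it works. The paper computes only three specific brackets and lets representation theory do the rest: $[\xi_{m+1}^{2,1},\xi_1^{m+1,0}]$ and $[\xi_1^{m+1,0},\xi_{m+2}^{1,1}]$ each produce one nonzero vector of the irreducible $\hat{\gt g}(0)_e$-submodules $\gt{sl}_m$, $\gt{sl}_n$, whence those whole submodules lie in the ($\hat{\gt g}(0)_e$-stable) commutator; the bound $\dim\Lambda^2(\hat{\gt g}(1)_e)^{\hat{\gt g}(0)_e}\le 1$ caps the invariant part at one line; Lemma~\ref{f} (non-degeneracy of $\hat f$) shows that line is nonzero; and the single bracket $[\xi_{m+1}^{1,1},\xi_1^{m+1,0}]=\xi_1^{1,1}-\xi_{m+1}^{m+1,1}$ identifies it as $\ff(me_d-ne_{d+1})$. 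Your plan replaces the equivariance argument and the form $\hat f$ by an exhaustive computation of all cross-family brackets plus trace bookkeeping; this is more elementary and avoids Lemma~\ref{f} entirely, at the price of having to verify surjectivity onto $\gt{sl}_m\oplus\gt{sl}_n$ by hand (which is easy: all off-diagonal matrix units and all diagonal differences are visibly hit).

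Two local slips should be fixed. First, the brackets of two generators from the same family are simply zero because both Kronecker deltas in (\ref{commutator}) vanish (e.g.\ for $[\xi_i^{j,0},\xi_p^{q,0}]$ with $i,p\le m<j,q$ one has $q\ne i$ and $j\ne p$); they do not ``land in $\hat{\gt g}(-2)_e$'' --- a nonzero bracket of two weight-$1$ elements always has weight $2$. Second, and more importantly, there are \emph{not} ``two independent trace relations'': that would force the image into $\gt{sl}_m\oplus\gt{sl}_n$, which is false. The correct constraint is the single relation $\mathrm{tr}_{\gt{gl}_m}+\mathrm{tr}_{\gt{gl}_n}=0$, coming from the fact that each diagonal bracket $[\xi_i^{j,0},\xi_j^{i,1}]=\xi_j^{j,1}-\xi_i^{i,1}$ contributes $+1$ to the $\gt{gl}_n$-trace and $-1$ to the $\gt{gl}_m$-trace; this single codimension-one condition cuts out exactly $\gt{sl}_m\oplus\gt{sl}_n\oplus\ff(me_d-ne_{d+1})$, as you correctly extract in the middle of your third paragraph. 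Keep that version of the statement and delete the two contradictory ones.
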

\begin{proof}
First, we show that the ``$\gt{sl}$-parts'' of $\hat{\gt g}(2)_e$ 
are contained in $[\hat{\gt g}_e,\hat{\gt g}_e]$. 
Suppose that $m>1$, otherwise $\gt{sl}_m$ is zero. 
According to (\ref{ad-h}), $\xi_{m+1}^{2,1},\xi_1^{m+1,0}\in\hat{\gt g}(1)_e$.
Hence
$[\xi_{m+1}^{2,1},\xi_1^{m+1,0}]=\xi_1^{2,1}\in 
[\hat{\gt g}(1)_e,\hat{\gt g}(1)_e]$.
By the ``zero-trace'' reason, $\xi_1^{2,1}\in\gt{sl}_m$ 
for the irreducible 
$\hat{\gt g}(0)_e$-subrepresentation 
$\gt{sl}_m\subset\hat{\gt g}(2)_e$. Since 
$[\hat{\gt g}(1)_e,\hat{\gt g}(1)_e]$ is $\hat{\gt g}(0)_e$-invariant, 
the whole subspace $\gt{sl}_m$ is contained in it. 
In order to prove the inclusion for the 
``$\gt{sl}_n$-part'' (in case $n>1$), we take
$[\xi_1^{m+1,0},\xi_{m+2}^{1,1}]=\xi_{m+2}^{m+1,1}$.
  
By Lemma~\ref{f}, the skew-symmetric form
$\hat f$ is non-degenerate on $\hat{\gt g}(1)_e$. 
The subspace $\gt{sl}_m\oplus\gt{sl}_n\subset\hat{\gt g}(2)_e$,
being a non-trivial $\hat{\g}(0)_e$-module, 
is orthogonal to $f$ (with respect to $\kappa$), 
hence $[\hat{\gt g}(1)_e,\hat{\gt g}(1)_e]$
contains at least one non-zero vector of the form $ae_1+be_2$. 
Recall that 
$\hat{\gt g}(1)_e=\ff^m{\otimes}(\ff^n)^*\oplus(\ff^m)^*{\otimes}\ff^n$ 
as a representation of $\hat{\gt g}(0)_e$.  
Since  
$[\hat{\gt g}(1)_e,\hat{\gt g}(1)_e]\subset \Lambda^2(\hat{\gt g}(1)_e)$
and $\dim\Lambda^2(\hat{\gt g}(1)_e)^{\hat{\g}(0)_e}\le 1$,
the subspace of $\hat{\gt g}(0)_e$-invariant vectors 
in $[\hat{\gt g}(1)_e,\hat{\gt g}(1)_e]$ 
is at most one dimensional. 
In other words, the subspace  $[\hat{\gt g}(1)_e,\hat{\gt g}(1)_e]$
contains at most one element commuting with $\hat{\gt g}(0)_e$.
Taking $y=[\xi_{m+1}^{1,1},\xi_1^{m+1,0}]$ we get an element 
$y=\xi_1^{1,1}-\xi_{m+1}^{m+1,1}$ in $[\hat{\gt g}(1)_e,\hat{\gt g}(1)_e]$
and $(mn)y-(me_d-ne_{d+1})\in\gt{sl}_m\oplus\gt{sl}_n$. 
\end{proof}

\begin{cl} If $d=1$, then $e_d=0$ and 
$\hat{\g}(2)_e=[\hat{\g}(1)_e,\hat{\g}(1)_e]$.
Otherwise $[\hat{\g}(1)_e,\hat{\g}(1)_e]$
is of codimension $1$ in $\hat{\g}(2)_e$.
\end{cl}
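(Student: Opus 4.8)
The plan is to read off the corollary directly from Lemma~\ref{2-blocks}, treating the two claimed cases separately. By the lemma, $[\hat{\g}(1)_e,\hat{\g}(1)_e]=\gt{sl}_m\oplus\gt{sl}_n\oplus\ff(me_d-ne_{d+1})$, so the entire question reduces to comparing this subspace with the full space $\hat{\g}(2)_e=\gt{sl}_m\oplus\gt{sl}_n\oplus\ff e_d\oplus\ff e_{d+1}$ described just before the lemma. I would first dispose of the degenerate case $d=1$. Here the partition is $((d+1)^m,d^n)=(2^m,1^n)$, and the size-$d$ blocks are single boxes on which $e$ acts as zero, so $e_d=0$. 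The displayed $\hat\g(0)_e$-decomposition of $\hat{\g}(2)_e$ then loses its $\ff e_d$ summand, while the distinguished element $me_d-ne_{d+1}$ degenerates to $-ne_{d+1}$, i.e.\ (up to the nonzero scalar $-n$) to $e_{d+1}$ itself. Thus $\ff(me_d-ne_{d+1})=\ff e_{d+1}$ already exhausts the one-dimensional ``central'' part of $\hat{\g}(2)_e$, and $[\hat{\g}(1)_e,\hat{\g}(1)_e]=\gt{sl}_m\oplus\gt{sl}_n\oplus\ff e_{d+1}=\hat{\g}(2)_e$.

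For the generic case $d>1$, both $e_d$ and $e_{d+1}$ are nonzero, so $\hat{\g}(2)_e$ genuinely contains the two-dimensional abelian piece $\ff e_d\oplus\ff e_{d+1}$, whereas the lemma furnishes only its one-dimensional subspace $\ff(me_d-ne_{d+1})$. Since the $\gt{sl}_m\oplus\gt{sl}_n$ parts coincide, the quotient $\hat{\g}(2)_e/[\hat{\g}(1)_e,\hat{\g}(1)_e]$ is identified with $(\ff e_d\oplus\ff e_{d+1})/\ff(me_d-ne_{d+1})$, which is one-dimensional. Hence $[\hat{\g}(1)_e,\hat{\g}(1)_e]$ has codimension $1$ in $\hat{\g}(2)_e$, as claimed.

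There is essentially no obstacle here: the corollary is a bookkeeping consequence of the lemma, and the only point demanding a moment's care is checking that $me_d-ne_{d+1}$ is not a scalar multiple of something lying in $\gt{sl}_m\oplus\gt{sl}_n$ when $d>1$, i.e.\ that it is genuinely a nonzero vector transverse to the semisimple parts. This is immediate because $e_d$ and $e_{d+1}$ commute with all of $\hat{\g}(0)_e=\gt{gl}_m\oplus\gt{gl}_n$ and are linearly independent scalar (central) directions, so no nonzero combination of them lands in the trace-zero subspaces. The main thing to get right is the case split itself and the observation that $d=1$ forces $e_d=0$, which collapses the dimension count by exactly the right amount.
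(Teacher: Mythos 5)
Your proposal is correct and is exactly the argument the paper intends: the corollary is stated without proof as an immediate consequence of Lemma~\ref{2-blocks}, and your case split ($d=1$ forces $e_d=0$, collapsing $\ff e_d\oplus\ff e_{d+1}$ to the line $\ff e_{d+1}=\ff(me_d-ne_{d+1})$; for $d>1$ the line $\ff(me_d-ne_{d+1})$ sits with codimension one in the two-dimensional central part) is precisely the bookkeeping being left to the reader. The only cosmetic point is that for $d=1$ the summand $\gt{sl}_n$ of $\hat{\g}(2)_e$ also degenerates to zero along with $e_d$, but since it disappears from both sides of the comparison this does not affect your conclusion.
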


In the following we are going to deal with the
orthogonal and symplectic Lie algebras and freely use results 
and assumptions of subsection~\ref{basis-2}.

\begin{lm}\label{1-block}
Suppose  $\gt g$ is either 
$\gt{sp}(\VV)$ or $\gt{so}(\VV)$ and 
$e\in\gt g$ is given by a rectangular partition $d^k$.
\begin{itemize}
\item \ If $(-1)^d\esi=1$, then $\gt g(0)_e=\gt{sp}_k$ 
and $\gt g(2m)=S^2\ff^k$ for even $m<d$,
$\gt g(2m)=\Lambda^2\ff^k$ for odd $m<d$.
\item \ If $(-1)^d\esi=-1$, then $\gt g(0)_e=\gt{so}_k$ 
and $\gt g(2m)=\Lambda^2\ff^k$ for even $m<d$,
$\gt g(2m)=S^2\ff^k$ for odd $m<d$.
\end{itemize}
\end{lm}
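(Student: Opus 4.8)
The plan is to realise the whole picture on the multiplicity space of the $\gt{sl}_2$-action and to read off both claims from a single sign computation. Since all Jordan blocks have size $d$, the triple $\{e,h,f\}$ turns $\VV$ into $\ff^k\otimes R$, where $R$ is the irreducible $\gt{sl}_2$-module of dimension $d$ and $\ff^k$ is the multiplicity space spanned by $u_1,\dots,u_k$, the vector $u_i\otimes v_a$ corresponding to $e^a{\cdot}w_i$ (here $v_0,\dots,v_{d-1}$ is the standard basis of $R$). First I would record that, by (\ref{ad-h}) and (\ref{commutator}), the basis element $\xi_i^{j,m}$ is exactly $E_{ji}\otimes N_m$, where $N_m\in\mathrm{End}(R)$ is the unique (up to scalar) weight-$2m$ operator commuting with $e$, namely the shift $v_a\mapsto v_{a+m}$. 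Hence, for every $0\le m\le d-1$, $\hat{\gt g}(2m)_e\cong\mathrm{End}(\ff^k)=\ff^k\otimes(\ff^k)^*$ as a module over $\hat{\gt g}(0)_e=\gt{gl}_k$.

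Next I would put in the form. The invariant form $(\ \,,\ )_{{_\VV}}$ factors as $B\otimes\langle\ \,,\ \rangle$, where $\langle\ \,,\ \rangle$ is the $\gt{sl}_2$-invariant form on $R$ and $B$ is a bilinear form on $\ff^k$; this is the content of the normalisation in Subsection~\ref{basis-2} together with Lemma~\ref{restr}. Since $\langle\ \,,\ \rangle$ is $(-1)^{d-1}$-symmetric, compatibility with the $\esi$-symmetry of $(\ \,,\ )_{{_\VV}}$ forces $B$ to be $\esi(-1)^{d-1}$-symmetric; that is, $B$ is symmetric when $(-1)^d\esi=-1$ and skew-symmetric when $(-1)^d\esi=1$. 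As $\gt g(0)_e$ is cut out inside $\gt{gl}_k$ by anti-self-adjointness with respect to $B$, this immediately gives $\gt g(0)_e=\gt{so}_k$ in the first case and $\gt g(0)_e=\gt{sp}_k$ in the second, settling the statements about $\gt g(0)_e$.

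The core of the argument is the behaviour of the involution $\sigma=-(\ \cdot\ )^\dagger$ on $\hat{\gt g}(2m)_e$. The one calculation I cannot avoid is the $\langle\ \,,\ \rangle$-adjoint of $N_m$: tracking degrees in $\langle N_m v_a, v_b\rangle$ yields $N_m^\dagger=(-1)^m N_m$. Consequently $\sigma$ acts on $\hat{\gt g}(2m)_e=\mathrm{End}(\ff^k)$ by $A\mapsto(-1)^{m+1}A^{\dagger_B}$, where $A^{\dagger_B}$ denotes the $B$-adjoint. Therefore the $(+1)$-eigenspace $\gt g(2m)_e$ is the space of $B$-anti-self-adjoint operators when $m$ is even and the space of $B$-self-adjoint operators when $m$ is odd, the value $m=0$ recovering $\gt g(0)_e$ as a consistency check.

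Finally I would use $B$ to identify $\mathrm{End}(\ff^k)\cong\ff^k\otimes\ff^k=S^2\ff^k\oplus\Lambda^2\ff^k$. Under this identification the anti-self-adjoint part is $\gt{so}_k\cong\Lambda^2\ff^k$ when $B$ is symmetric and $\gt{sp}_k\cong S^2\ff^k$ when $B$ is skew, while the self-adjoint part is the complementary summand. Combining this with the parity dichotomy of the previous step reproduces all four cases of the lemma. The main obstacle is purely the sign bookkeeping: one must pin down $N_m^\dagger=(-1)^m N_m$ and the symmetry type of $B$ so that the self-adjoint versus anti-self-adjoint alternative lines up correctly with $S^2\ff^k$ versus $\Lambda^2\ff^k$; once these signs are fixed by the $m=0$ normalisation, the four cases follow mechanically.
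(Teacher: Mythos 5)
Your argument is correct and is essentially the computation the paper performs: your multiplicity space $\ff^k$ is the paper's complement $W$ of $\Im e$, your form $B$ is the paper's form $(w,w'):=(w,e^{d-1}{\cdot}w')$ on $W$, and your sign $N_m^\dagger=(-1)^mN_m$ is exactly the sign that appears when the paper moves $e^{m}$ across the invariant form after identifying $W$ with $e^m{\cdot}W$. If anything your write-up is slightly more complete, since you track the $(-1)^m$ explicitly (the paper's displayed adjointness condition elides it even though the conclusion depends on the parity of $m$) and you derive the description of $\gt g(0)_e$ rather than citing Jantzen for it.
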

\begin{proof}
The assertions concerning $\gt g(0)_e$ 
follow, for example, from 
\cite[Sect.~3, Prop.~2]{ja}.
Recall that $W\subset\VV$ is a $k$-dimensional complement of $\Im e$.
and each $\xi\in\gt g_e$ is completely determined by its values on 
$W$. Therefore 
$\gt g(2m)_e$ can be identified with the set of 
$\xi\in\Hom(W,e^{m}{\cdot}W)$ such that 
$(\xi(w),e^{d-m-1}w')_{{_\VV}}=-(w,e^{d-m-1}{\cdot}\xi(w'))_{{_\VV}}$. 
Identifying $W$ and $e^{m}{\cdot}W$ by means of $e^{m}$, we see that 
$(\xi(w),w')=-(w,\xi(w')$ for a non-degenerate symmetric or skew-symmetric 
bilinear form $(w,w'):=(w,e^{d-1}{\cdot}w')_{{_\VV}}$ on $W$, and that 
is the only condition on $\xi$. In case the form is 
symmetric, we get $\gt  g(2m)_e\cong \Lambda^2\ff^k$,
and if it is skew-symmetric, then $\gt g(2m)_e\cong S^2\ff^k$.
\end{proof}

\begin{lm}\label{2-block-2}
Let $\gt g$ be either 
$\gt{so}(\VV)$ or $\gt{sp}(\VV)$ and $e\in\gt g$ 
a nilpotent element defined by a partition 
$((d+1)^m,d^n)$. Let $e_d,e_{d+1}$ be as in Lemma~\ref{2-blocks}.
Then 
$me_d-ne_{d+1}\in [\gt g(1)_e,\gt g(1)_e]$. 
\end{lm}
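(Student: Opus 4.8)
The plan is to lift the $\gt{gl}$-computation behind Lemma~\ref{2-blocks} to the symmetric setting of \S\ref{basis-2}, while keeping track of the central part of the resulting bracket. Observe first that $\sigma$ preserves each Jordan block, hence the splitting $e=e_{d+1}+e_d$; so both $e_d$ and $e_{d+1}$ are $\sigma$-fixed and, being sums of elements $\xi_i^{i,1}$ of $\ad(h)$-weight $2$ (see (\ref{ad-h})), they lie in $\gt g(2)_e$. Thus $me_d-ne_{d+1}\in\gt g(2)_e$, and it is enough to produce it inside $[\gt g(1)_e,\gt g(1)_e]$. Enumerate the blocks as in \S\ref{basis-2}: let $1,\dots,m$ be the blocks of size $d+1$ and $m+1,\dots,m+n$ those of size $d$, the involution $i\mapsto i'$ preserving each of these two sets.

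In the proof of Lemma~\ref{2-blocks} the element $me_d-ne_{d+1}$ was obtained, modulo $\gt{sl}_m\oplus\gt{sl}_n$, from the bracket $[\xi_{m+1}^{1,1},\xi_1^{m+1,0}]$ of two vectors of $\hat{\gt g}(1)_e$. I would replace these by their $\sigma$-symmetric counterparts from the basis of $\gt g_e$ given in \S\ref{basis-2},
$$
A=\xi_{m+1}^{1,1}+\varepsilon(m{+}1,1,d)\,\xi_{1'}^{(m+1)',0},\qquad
B=\xi_1^{m+1,0}+\varepsilon(1,m{+}1,d)\,\xi_{(m+1)'}^{1',1},
$$
each summand of which has $\ad(h)$-weight $1$ by (\ref{ad-h}); thus $A,B\in\gt g(1)_e$. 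Expanding $[A,B]$ by (\ref{commutator}), the two mixed brackets $[\xi_{m+1}^{1,1},\xi_{(m+1)'}^{1',1}]$ and $[\xi_{1'}^{(m+1)',0},\xi_1^{m+1,0}]$ vanish, since every Kronecker delta appearing in them would equate a size-$(d{+}1)$ index with a size-$d$ index. What remains is
$$
[A,B]=\bigl(\xi_1^{1,1}-\xi_{m+1}^{m+1,1}\bigr)+c\,\bigl(\xi_{(m+1)'}^{(m+1)',1}-\xi_{1'}^{1',1}\bigr),\qquad c:=\varepsilon(m{+}1,1,d)\,\varepsilon(1,m{+}1,d).
$$

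Everything hinges on the sign $c$, which I expect to be the main point. Using the relation defining $\varepsilon(i,j,s)$ together with $(w,e^s{\cdot}v)_{{_\VV}}=(-1)^s(e^s{\cdot}w,v)_{{_\VV}}$ and the normalisation (\ref{normalise}), a short computation should give $\varepsilon(m{+}1,1,d)=\nu_1\nu_{m+1}$ and $\varepsilon(1,m{+}1,d)=-\nu_1\nu_{m+1}$, where $\nu_i=(w_i,e^{d_i-1}{\cdot}w_{i'})_{{_\VV}}=\pm1$; hence $c=-1$, independently of the parities and of whether the blocks are self-paired. This sign is decisive: with $c=-1$ one finds $[A,B]=\xi_1^{1,1}+\xi_{1'}^{1',1}-\xi_{m+1}^{m+1,1}-\xi_{(m+1)'}^{(m+1)',1}$, whose components in the factors $\gt{gl}_m,\gt{gl}_n\subset\hat{\gt g}(2)_e$ have nonzero traces $2$ and $-2$; had $c$ been $+1$ these traces would have cancelled and the central part would vanish.

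Finally I would extract the central part. As $\gt g(0)_e$ is reductive, $\gt g(2)_e$ is a completely reducible $\gt g(0)_e$-module and $[\gt g(1)_e,\gt g(1)_e]$ is a submodule, so the projection onto the trivial isotypic component carries $[\gt g(1)_e,\gt g(1)_e]$ into itself; in particular the $\gt g(0)_e$-invariant part of $[A,B]$ lies again in $[\gt g(1)_e,\gt g(1)_e]$. By Lemma~\ref{1-block} the size-$(d{+}1)$ part of $\gt g(2)_e$ is, as a $\gt g(0)_e$-module, an $S^2$ or a $\Lambda^2$ whose invariants form the line $\ff e_{d+1}$, and likewise the size-$d$ part has invariants $\ff e_d$; the invariant projection of $\xi_1^{1,1}+\xi_{1'}^{1',1}$ is then $\tfrac2m e_{d+1}$ and that of $-\xi_{m+1}^{m+1,1}-\xi_{(m+1)'}^{(m+1)',1}$ is $-\tfrac2n e_d$. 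Hence $\tfrac2m e_{d+1}-\tfrac2n e_d\in[\gt g(1)_e,\gt g(1)_e]$, and multiplying by $-\tfrac{mn}{2}$ yields $me_d-ne_{d+1}\in[\gt g(1)_e,\gt g(1)_e]$. The two places I expect to demand the most care are the sign identity $c=-1$ and the use of Lemma~\ref{1-block} to pin down the invariant lines $\ff e_d,\ff e_{d+1}$ and the exact values of the projections.
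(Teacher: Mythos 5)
Your argument is correct, but it takes a genuinely different route from the paper's. The paper stays soft: from Lemma~\ref{2-blocks} it gets the inclusion $[\gt g(1)_e,\gt g(1)_e]\subset\gt a_1\oplus\gt a_2\oplus\ff(me_d-ne_{d+1})$, observes that $\gt a_1\oplus\gt a_2$, being a sum of non-trivial $\gt g(0)_e$-modules, is orthogonal to $f$, and then invokes Lemma~\ref{f} (non-degeneracy of $\hat f(\xi,\eta)=\kappa(f,[\xi,\eta])$ on $\gt g(1)_e$) to conclude the commutator is \emph{not} orthogonal to $f$, so the central line must be hit. You instead exhibit an explicit bracket and compute its invariant part. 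I checked your two flagged points: using $(e{\cdot}w_1,e^{d-1}{\cdot}w_{1'})_{{_\VV}}=-(w_1,e^{d_1-1}{\cdot}w_{1'})_{{_\VV}}$ one indeed gets $\varepsilon(m{+}1,1,d)=\nu_1\nu_{m+1}$ and $\varepsilon(1,m{+}1,d)=-\nu_1\nu_{m+1}$, hence $c=-1$; and the invariant projections are as you state because the complement of $\ff e_{d+1}$ (resp.\ $\ff e_d$) in the corresponding diagonal block is exactly the trace-zero part. The paper's route buys freedom from sign bookkeeping (Lemma~\ref{f} exists precisely to avoid your computation); yours buys independence from Lemma~\ref{f} and produces $me_d-ne_{d+1}$ as an explicit bracket. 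Both rely on the same structural fact that $\gt a_1$ and $\gt a_2$ contain no non-zero $\gt g(0)_e$-invariant vectors.
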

\begin{proof}
By Lemma~\ref{1-block}, 
the subspace $\gt g(2)_e$ decomposes 
into a direct sum of irreducible 
$\gt g(0)_e$-representations as follows
$\gt g(2)_e=\gt a_1\oplus\gt a_2\oplus\ff e_d\oplus\ff e_{d+1}$, 
where $e_d,e_{d+1}$ are the same central vectors as in the
$\gt{gl}(\VV)$ case and $\gt a_1=\gt g\cap\gt{sl}_n$,
$\gt a_2=\gt g\cap\gt{sl}_m$. 
The exact description of subspaces $\gt a_1$ and $\gt a_2$ 
depends on $\gt g$ and the parity of $d$, see Lemma~\ref{1-block}.
In any case 
they both contain no non-zero $\gt g(0)_e$-invariant vectors. 
Using Lemma~\ref{2-blocks}, we get an inclusion 
$[\gt g(1)_e,\gt g(1)_e]\subset\gt a_1\oplus\gt a_2\oplus
 \ff(me_d-ne_{d+1})$. 
The first two subspaces,
$\gt a_1,\gt a_2$, are orthogonal to $f$, but the whole 
commutator is not, due to Lemma~\ref{f}.
Hence $me_d-ne_{d+1}\in[\gt g(1)_e,\gt g(1)_e]$.  
\end{proof}

It is natural to suggest that
the following conditions are equivalent:
\begin{itemize}
\item[({\sf i})] \ $e$ is reachable and $\gt g(0)_e$ is semisimple;  
\item[({\sf ii})] \ $\gt g_e=[\gt g_e,\gt g_e]$.
\end{itemize}
Clearly, condition ({\sf i}) is necessary for ({\sf ii}). 
Below we prove that in the classical Lie algebras 
it is also sufficient.

Given $x\in\hat{\gt g}$ and a non-negative integer
$q$ let $x^q\in\hat{\gt g}$ be the $q$th  power of $x$
(as a matrix). Note that if $\gt g$ 
is either $\gt{so}(\VV)$ or $\gt{sp}(\VV)$, the number $q$ is odd, 
and $x\in\gt g$, then also $x^q\in\gt g$. This remains true for a product 
$x_1\ldots x_q$ of $q$ elements $x_i\in\gt g$. 

\begin{thm}\label{equi-cl}
Suppose that $\gt g$ is a simple classical Lie 
algebra and $e\in\gt g$ a nilpotent element.
Then conditions ({\sf i}) and ({\sf ii}) are equivalent. 
\end{thm}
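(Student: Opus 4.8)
The reverse implication ({\sf ii})$\Rightarrow$({\sf i}) is the easy one and I would dispose of it first. The centraliser carries the Levi decomposition $\gt g_e=\gt g(0)_e\semidir\gt n$, with nilradical $\gt n=\bigoplus_{\lambda\ge 1}\gt g(\lambda)_e$ and reductive part $\gt g(0)_e$ (indeed a weight-$0$ vector killed by $\ad e$ spans a trivial $\gt{sl}_2$-submodule, so $\gt g(0)_e=\gt g^{\{e,h,f\}}$ is reductive), whence $\gt g_e/\gt n\cong\gt g(0)_e$. If $\gt g_e=[\gt g_e,\gt g_e]$, then projecting onto $\gt g(0)_e$ gives $\gt g(0)_e=[\gt g(0)_e,\gt g(0)_e]$, so the reductive algebra $\gt g(0)_e$ is semisimple; and $e\in\gt g_e=[\gt g_e,\gt g_e]$ says $e$ is reachable. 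Hence ({\sf ii}) forces ({\sf i}).

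For ({\sf i})$\Rightarrow$({\sf ii}) I would work inside the $\ad(h)$-grading $\gt g_e=\bigoplus_{\lambda\ge 0}\gt g(\lambda)_e$, in which $[\gt g_e,\gt g_e]$ is a graded subspace, so it suffices to check $\gt g(\lambda)_e\subseteq[\gt g_e,\gt g_e]$ for each $\lambda$. Since $\gt g(0)_e$ is semisimple it acts completely reducibly on every $\gt g(\lambda)_e$, and the noncentral part splits off as $[\gt g(0)_e,\gt g(\lambda)_e]\subseteq[\gt g_e,\gt g_e]$ automatically; degree $0$ is then immediate from $\gt g(0)_e=[\gt g(0)_e,\gt g(0)_e]$. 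Thus the whole problem reduces to a single assertion: every $\gt g(0)_e$-invariant (central) vector of each $\gt g(\lambda)_e$ with $\lambda\ge 1$ must be shown to lie in $[\gt g_e,\gt g_e]$.

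In degree $1$ the invariants must actually vanish. Using the description after (\ref{ad-h}) — $\gt g(1)_e$ is spanned by the $\xi_i^{j,0}$ with $d_j=d_i-1$ and the $\xi_i^{j,1}$ with $d_j=d_i+1$ — one sees that $\gt g(1)_e$ is a sum of bifundamental $\gt g(0)_e$-modules joining Jordan blocks of adjacent sizes, which carry no trivial summand (this is a point where the semisimplicity of $\gt g(0)_e$, excluding a one-dimensional $\gt{so}_2$-factor, is needed). Hence $\gt g(1)_e=[\gt g(0)_e,\gt g(1)_e]\subseteq[\gt g_e,\gt g_e]$. For $\lambda\ge 2$ the central vectors are the genuine content: these are the analogues of $e$ and of the block-components $e_d$ of Lemmas~\ref{2-blocks} and \ref{2-block-2}. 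I would reduce them to configurations of two adjacent block sizes by repeatedly using the commutator relation (\ref{commutator}) to factor a basis vector $\xi_i^{j,s}$ through an intermediate block, expressing it as a bracket of elements of strictly smaller degree, and thereby importing the explicit computations of Lemmas~\ref{1-block}, \ref{2-blocks}, and \ref{2-block-2}.

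The central vectors themselves are caught using reachability together with Lemma~\ref{f}. Reachability gives $e\in[\gt g(1)_e,\gt g(1)_e]$, while Lemma~\ref{2-block-2} gives the second, linearly independent combination $me_d-ne_{d+1}\in[\gt g(1)_e,\gt g(1)_e]$; together these place both $e_d$ and $e_{d+1}$ — and hence the whole central part of $\gt g(2)_e$ — inside $[\gt g_e,\gt g_e]$. The role of the non-degeneracy of $\hat f$ on $\gt g(1)_e$ (Lemma~\ref{f}) is to guarantee that $[\gt g(1)_e,\gt g(1)_e]$ is not confined to the $f$-orthogonal $\gt{sl}$-parts of $\gt g(2)_e$ and must reach the central line. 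I expect the main obstacle to be precisely this treatment of $\lambda\ge 2$: organising the reduction to adjacent two-block configurations uniformly across all degrees and all central invariants at once, and verifying that the semisimplicity hypothesis on $\gt g(0)_e$ is exactly what makes each such central vector attainable as a commutator.
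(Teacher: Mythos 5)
Your reduction is sound as far as it goes: the easy direction ({\sf ii})$\Rightarrow$({\sf i}), the observation that semisimplicity of $\gt g(0)_e$ disposes of every non-trivial isotypic component via $[\gt g(0)_e,\gt g(\lambda)_e]$, the vanishing of invariants in $\gt g(1)_e$ (the paper proves this for arbitrary reductive $\gt g$ in Lemma~\ref{g1}, without needing $\gt g(0)_e$ semisimple), and the capture of the degree-$2$ central line via reachability plus Lemma~\ref{2-block-2}. But the part you defer as ``the main obstacle'' is exactly where the proof lives, and your sketch of it is not adequate. First, you never invoke the classification of reachable elements in $\gt{so}(\VV)$ and $\gt{sp}(\VV)$ (\cite[Theorem~2.1.(4)]{Dima-reach}): the Jordan block sizes form a consecutive chain $d,d-1,\ldots,1$ with all multiplicities positive. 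This is what lets one apply Lemma~\ref{2-block-2} iteratively along the chain of adjacent sizes to conclude $e_2,e_3,\ldots,e_d\in[\gt g_e,\gt g_e]$ one at a time; without it your ``reduction to adjacent two-block configurations'' has no anchor. Second, you misidentify the central vectors for $\lambda\ge 2$. They are not only the block components $e_i$: they comprise all odd matrix powers $e_i^{2s+1}$ (the $\gt{so}_r$- resp. $\gt{sp}_r$-invariant vectors in $S^2\ff^r$ resp. $\Lambda^2\ff^r$ sitting in $\gt g(2(2s+1))_e$), and, separately, the vectors $\xi_p^{q,s}\pm\xi_q^{p,s'}$ arising from pairs of block sizes $i\ne j$ both occurring with multiplicity one (a $\ff\otimes\ff$ summand). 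Neither family is touched by your two-adjacent-block strategy: the powers $e_i^{2s+1}$ live inside a single block size, and the multiplicity-one pairs can join blocks of non-adjacent sizes.

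The paper handles these two families by arguments you would need to supply. For the powers it uses the identity $e_i^{2s+1}=e^{2s}e_i$ (valid because $e_ie_j=0$ for $i\ne j$) together with the fact that $e^{2s}$ is central in $\gt g_e$ and $e^{2s}\gt g_e\subset\gt g_e$, so that $e^{2s}[\gt g_e,\gt g_e]=[e^{2s}\gt g_e,\gt g_e]\subset[\gt g_e,\gt g_e]$; this imports $e_i\in[\gt g_e,\gt g_e]$ into all higher degrees at once. For the multiplicity-one pairs it writes the lowest invariant $x(0)=\xi_p^{q,0}+(-1)\xi_q^{p,i-j}$ explicitly as a commutator $[z,y]$ of two elements routed through the intermediate block of size $i-1$ (which exists, again by the consecutive-chain structure), and then climbs via $x(s+1)=[e_i,x(s)]$. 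Without these two steps the proof is incomplete. A smaller point: you also skip the type $A$ case, where condition ({\sf i}) forces $e=0$ (for $e$ regular the centraliser is abelian, so $e$ is not reachable), so nothing beyond that observation is needed there.
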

\begin{proof}
Actually,  we need to show only that ({\sf i}) implies  ({\sf ii}).  

Suppose first that $\gt g=\gt{sl}(\VV)$. 
Then $\gt g(0)_e$ is semisimple exactly
in two cases: $e=0$ and $e$ is regular. In the first 
of them there is nothing to prove. In the second case, where 
$\gt g_e$ is commutative, $e$ is not reachable.  

Suppose $\gt g$ is 
either $\gt{so}(\VV)$ or $\gt{sp}(\VV)$ and 
$e\in\gt g$ satisfies ({\sf i}).  According to 
the description of reachable nilpotent elements 
\cite[Theorem~2.1.(4)]{Dima-reach}, $e$ has Jordan 
blocks  of sizes $(d,d-1,\ldots,1)$
with positive multiplicities $(r_d,\ldots,r_1)$.
Let $e=e_d+e_{d-1}+\ldots+e_1$ be a decomposition 
of $e$ according to the sizes of Jordan blocks.
We assume that $e_i{\cdot}\VV[t]=0$
if $\dim\VV[t]\ne i$. 
Then $e_1=0$ and $e_ie_j=0$ for $i\ne j$. 
Since   $\gt g(0)_e$ is semisimple,  it is contained in  
$[\gt g_e,\gt g_e]$.
Using Lemma~\ref{1-block}, it is not difficult to see that
the nilpotent radical of $\gt g_e$ contains 
$\gt g(0)_e$-subrepresentations of the form 
$\ff^{r_i}\otimes\ff^{r_j}$, 
$\Lambda^2\ff^r$, and $S^2\ff^r$ ($r=r_i,r_j$) of algebras
$\gt{sp}_{r_i}$, $\gt{so}_{r_j}$. 
Each non-trivial representation 
appears also in $[\gt g(0)_e,\gt g_e]$.  
Trivial representations, or elements commuting 
with $\gt g(0)_e$, are either 
$\gt{so}_r$-invariant vectors in $S^2\ff^r$ 
(correspondingly, $\gt{sp}_r$-invariant vectors in $\Lambda^2\ff^r$)
or  $\ff{\otimes}\ff$. 
Vectors of the first type are  $e_i^{2s+1}$,
vectors of the second type
come from pairs $r_i=r_j=1$ with $i\ne j$
as $\xi_p^{q,s}\pm\xi_q^{p,s'}$, where 
$p$th Jordan block is the unique block of size $i$ and 
$q$th Jordan block is the unique block of size $j$. 
  
Recall that $e_1=0$.
Lemma~\ref{2-block-2} implies that 
$e_2\in [\gt g(1)_e,\gt g(1)_e]$. Applying the same 
lemma to the partition $(3^{r_3},2^{r_2})$, we obtain 
that $r_2e_3-r_3e_2$ is contained in $[\gt g(1)_e,\gt g(1)_e]$. 
Hence $e_3\in [\gt g_e,\gt g_e]$. Continuing through all 
sizes $3,4,\ldots,d$  of Jordan blocks we prove that all $e_i$ are elements of
the derived algebra $[\gt g_e,\gt g_e]$.    

Consider the matrix product $e^{2s}e_i\in\gt g$. 
Since $e=e_d+e_{d-1}+\ldots+e_1$ and 
$e_ie_j=0$ for $i\ne j$, we obtain that 
$e^{2s}e_i=e_i^{2s+1}$. Because $e$ is a central 
element in $\gt g_e$,  all its powers commute with 
$\gt g_e$. 
Thus $e^{2s}[\xi,\eta]=[e^{2s}\xi,\eta]$ for 
all $\xi,\eta\in\gt g_e$. Moreover 
$e^{2s}\xi\in\gt g_e$. 
Since $e_i\in [\gt g_e,\gt g_e]$,
we have 
$$ 
e_i^{2s+1}=e^{2s}e_i\in e^{2s}[\gt g_e,\gt g_e] =
 [e^{2s}\gt g_e,\gt g_e]\subset [\gt g_e,\gt g_e]
$$
for all $s$.  

It remains to deal with pairs $(i,j)$, where $i\ne j$ and 
$r_i=r_j=1$. Let $t\mapsto t'$ be the same involution on the set 
of Jordan blocks
as in Lemma~\ref{restr}.
Suppose that the Jordan block of size 
$i$ has number $p$ and the Jordan block of size $j$ 
has number $q$.  Then $p'=p$ and $q'=q$. In particular, 
$i$ and $j$ have the same parity.  Assume that $i>j$. 
Then trivial $\gt g(0)_e$-representations 
associated with the pair $(i,j)$
are generated by the
vectors 
$$
x(s):=\xi_p^{q,s}+(-1)^{s+1}\xi_q^{p,i-j+s} \enskip  \text{ with } \ \ 0\le s\le j-1.
$$
Note that $x(s+1)=[e_i,x(s)]$. 
Thus we only need to show that $x(0)$ is contained in the 
derived subalgebra. 

The Jordan block number $p+1$ has size $i-1$.
Without any doubt, $i-1$ has different from $i$ parity.
Hence $(p+1)'= p+2$ and $p+2<q$.  
Take two elements $y,z\in\gt g_e$:  
$$
y=\xi_p^{p+1,0}-\xi_{p+2}^{p,1}, \enskip
z=\xi_{p+1}^{q,0}-\xi_{q}^{p+2,i-1-j}
$$
and compute their commutator according to~(\ref{commutator}):
$$
[z,y]=\xi_{p+1}^{q,0}\xi_p^{p+1,0} -z\xi_{p+2}^{p,1}
-y\xi_{p+1}^{q,0}-\xi_{p+2}^{p,1}\xi_{q}^{p+2,i-1-j}=
 \xi_p^{q,0}-\xi_q^{p,i-j}=x(0).
$$
This completes the proof.
\end{proof}

In \cite{Dima-reach} a question was raised whether 
the properties $e\in[\gt g_e,\gt g_e]$ and 
$\gt g(\lambda+1)=[\gt g(1)_e,\gt g(\lambda)_e]$ are equivalent. 
The positive answer was given
for $\gt g=\gt{sl}(\VV)$ and $\lambda\ge 1$,  see \cite[Theorem~4.5]{Dima-reach}. 
Here 
we prove the equivalence  for $\gt{sp}(\VV)$ and $\gt{so}(\VV)$.
Of course, if $\gt g(2)_e=[\gt g(1)_e,\gt g(1)_e]$, 
then $e$ is reachable.

\begin{lm}\label{g1}
For any reductive Lie algebra $\gt g$ and any nilpotent 
element $e\in\gt g$ we have $\gt g(1)_e=[\gt g(0)_e,\gt g(1)_e]$.
\end{lm}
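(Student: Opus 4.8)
The plan is to reduce the identity to a statement about invariants and then to eliminate those invariants by a double‑centraliser argument. Recall first that $\gt g(0)_e=\gt g_e\cap\gt g(0)$ is precisely the centraliser of the $\gt{sl}_2$-triple $\{e,h,f\}$: an element $x\in\gt g(0)$ with $[e,x]=0$ satisfies $[e,[f,x]]=[h,x]=0$, so $[f,x]\in\Ker(\ad e)\cap\gt g(-2)=0$ (highest-weight vectors have non-negative weight), whence $x$ also commutes with $f$. Thus $\gt g(0)_e$ is a reductive subalgebra of $\gt g$. Since $[\gt g(0)_e,\gt g(1)_e]\subseteq\gt g(1)_e$, the space $\gt g(1)_e$ is a $\gt g(0)_e$-module, and in characteristic $0$ it is completely reducible. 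For a reductive Lie algebra acting on a finite-dimensional module $M$, the subspace $[\gt g(0)_e,M]$ is the sum of the non-trivial irreducible constituents and $M=[\gt g(0)_e,M]\oplus M^{\gt g(0)_e}$. Hence $\gt g(1)_e=[\gt g(0)_e,\gt g(1)_e]$ holds if and only if $(\gt g(1)_e)^{\gt g(0)_e}=0$, and it is this vanishing that I would prove.

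So suppose $v\in\gt g(1)_e$ commutes with all of $\gt g(0)_e$; the goal is to show $v=0$. I would put $\el:=\{\xi\in\gt g\mid[\xi,\gt g(0)_e]=0\}$, the centraliser of $\gt g(0)_e$ in $\gt g$. Since $\gt g(0)_e$ is reductive in $\gt g$, the subalgebra $\el$ is again reductive; it is $\ad(h)$-stable because $h$ commutes with $\gt g(0)_e$, so $\el$ inherits the grading $\el(\lambda)=\el\cap\gt g(\lambda)$. By construction $e,h,f\in\el$, and by the assumption on $v$ also $v\in\el(1)$.

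The key step is to locate the nilpotent $e$ inside $\el$. The centraliser of the triple $\{e,h,f\}$ computed inside $\el$ is $\el\cap\gt g(0)_e$, which is exactly the set of elements of $\gt g(0)_e$ commuting with the whole of $\gt g(0)_e$, i.e.\ the centre $Z(\gt g(0)_e)$. As the centre of a reductive Lie algebra, $Z(\gt g(0)_e)$ consists of semisimple elements and is a torus; hence $e$ is a distinguished nilpotent element of $\el$. Since distinguished nilpotent elements are even (see e.g.\ \cite{cm}), all odd graded components of $\el$ vanish, so in particular $\el(1)=0$ and therefore $v=0$. This establishes $(\gt g(1)_e)^{\gt g(0)_e}=0$, and with it the lemma.

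The routine half of the argument is the reduction by complete reducibility; the substantive point, and the step I expect to be the main obstacle, is the vanishing of invariants. The cleanest route I see passes through the double centraliser $\el$, where the centraliser of the triple collapses to the torus $Z(\gt g(0)_e)$, making $e$ distinguished, so that the lemma follows from the standard fact that distinguished nilpotent elements are even. One should take care to justify that $\el$ is reductive and $\ad(h)$-stable and that the whole triple lies in $\el$, all of which are standard consequences of $\gt g(0)_e$ being the reductive centraliser of $\{e,h,f\}$.
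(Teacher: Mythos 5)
Your proof is correct and follows essentially the same route as the paper: both reduce the statement, via complete reducibility, to the vanishing of $\gt g(0)_e$-invariants in $\gt g(1)_e$, and both obtain that vanishing by trapping $e$ as a distinguished (hence even) nilpotent element of a reductive subalgebra containing the would-be invariant vector, so that its degree-$1$ piece is zero. The only difference is cosmetic: the paper passes to the centraliser of a maximal torus of $\gt g(0)_e$ (a Levi subalgebra, so manifestly reductive), whereas you pass to the full double centraliser $\gt z_{\gt g}(\gt g(0)_e)$, which costs you the extra (standard) fact that the centraliser of a subalgebra reductive in $\gt g$ is again reductive.
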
 
\begin{proof}
Let $\gt t\subset\gt g(0)_e$ be a maximal torus. 
Then 
$\gt z_{\gt g}({\gt t})=\gt t{\oplus}\gt h$, 
where 
$\gt h$ is a reductive subalgebra and $e\in\gt h$. 
Moreover $\gt h_e=\gt h\cap\gt g_e$ contains no 
semisimple elements. In other words, 
$e$ is a {\it  distinguished} nilpotent element in $\gt h$.
Therefore $e\in\gt h$ is even cf. \cite[Theorem~8.2.3]{cm}
and $\gt h(1)_e=0$. It follows that $\gt g(1)_e$ 
contains no non-zero $\gt g(0)_e$-invariant vectors and 
$\gt g(1)_e=[\gt g(0)_e,\gt g(1)_e]$.
\end{proof}

\begin{lm}\label{g2} 
Suppose $\gt g$ is either $\gt{sp}(\VV)$ or 
$\gt{so}(\VV)$ and $e\in\gt g$ is reachable. 
Then $[\gt g(1)_e,\gt g(1)_e]=\gt g(2)_e$. 
\end{lm}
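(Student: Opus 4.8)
Set $\gt c:=[\gt g(1)_e,\gt g(1)_e]$. Because $[\gt g(1),\gt g(1)]\subseteq\gt g(2)$ and $\gt g_e$ is a subalgebra, the inclusion $\gt c\subseteq\gt g(2)_e$ is automatic, so the whole content is the reverse inclusion. Since $[\gt g(0)_e,\gt g(1)_e]\subseteq\gt g(1)_e$ and $\ad\gt g(0)_e$ acts by derivations, $\gt c$ is a $\gt g(0)_e$-submodule of $\gt g(2)_e$; my plan is therefore to decompose $\gt g(2)_e$ into irreducible $\gt g(0)_e$-modules and to show that each constituent lies in $\gt c$. By the classification of reachable elements used in the proof of Theorem~\ref{equi-cl}, the Jordan blocks of $e$ have the consecutive sizes $d,d{-}1,\dots,1$ with positive multiplicities $r_d,\dots,r_1$, and $\gt g(0)_e=\bigoplus_c\gt{cl}_{r_c}$ with $\gt{cl}_{r_c}\in\{\gt{sp}_{r_c},\gt{so}_{r_c}\}$ fixed by the parity rule of Lemma~\ref{restr}. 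Reading (\ref{ad-h}), a basis vector $\xi_i^{j,s}$ lies in $\gt g(2)_e$ exactly when $|d_i-d_j|\le 2$ and $d_i\equiv d_j\pmod 2$; hence $\gt g(2)_e$ is the sum of a same-size piece for each size $c$, isomorphic to $S^2\ff^{r_c}$ or $\Lambda^2\ff^{r_c}$ by Lemma~\ref{1-block}, and a cross piece $\ff^{r_c}\otimes\ff^{r_{c-2}}$ for each pair $(c,c{-}2)$. Each constituent appears with multiplicity one, being supported on different factors of $\gt g(0)_e$.

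The non-trivial constituents I would catch by a single witnessing bracket, exploiting that a vector of $\gt g(1)_e$ bridges two adjacent block sizes. For a same-size-$c$ piece (when $r_c\ge 2$), pick blocks $a\ne b$ of size $c$ and a block $p$ of size $c{-}1$: by (\ref{ad-h}) both $\xi_p^{a,1}$ and $\xi_b^{p,0}$ lie in $\gt g(1)_e$, and by (\ref{commutator}) their bracket is the off-diagonal vector $\xi_b^{a,1}$, sitting in the traceless (non-trivial) part of $S^2\ff^{r_c}$ or $\Lambda^2\ff^{r_c}$; this is the computation of Lemma~\ref{2-blocks} read in the symmetrised basis of subsection~\ref{basis-2}. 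For a cross piece $(c,c{-}2)$ I would bracket a $\gt g(1)_e$-vector bridging $c\leftrightarrow c{-}1$ with one bridging $c{-}1\leftrightarrow c{-}2$, obtaining a nonzero element of $\ff^{r_c}\otimes\ff^{r_{c-2}}$. In each case, $\gt c$ being a $\gt g(0)_e$-submodule and the constituent being irreducible of multiplicity one, the witness forces the entire constituent into $\gt c$.

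There remain the $\gt g(0)_e$-invariant constituents, namely the central vectors $e_c$ ($2\le c\le d$) from the same-size pieces together with the rank-one cross vectors attached to pairs $(c,c{-}2)$ with $r_c=r_{c-2}=1$. That every $e_c$ lies in $\gt c$ is the inductive argument already run in the proof of Theorem~\ref{equi-cl}: Lemma~\ref{2-block-2} gives $e_2\in\gt c$, and applied to the pair of sizes $(c,c{-}1)$ it yields $r_{c-1}e_c-r_c e_{c-1}\in\gt c$, whence $e_c\in\gt c$ step by step. For a rank-one cross vector $x(0)$ I would recycle the explicit identity $x(0)=[z,y]$ from the end of that proof, checking by (\ref{ad-h}) that when the two sizes differ by $2$ both $y$ and $z$ have $\ad h$-eigenvalue $1$, so that indeed $x(0)\in\gt c$. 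Once all constituents are inside $\gt c$, we conclude $\gt g(2)_e=\gt c$.

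The delicate point throughout is that everything must be produced from $\gt g(1)_e$ alone and not from $\gt g_e$ at large; the trivial constituents are where this bites, since a priori Lemma~\ref{2-blocks} and the $[z,y]$ identity only place their outputs in $[\gt g_e,\gt g_e]$. The structural reason the invariants cannot slip away is Lemma~\ref{f}: as $f$ centralises the whole triple it is fixed by $\gt g(0)_e$, so the functional $\kappa(f,\cdot)$ annihilates the non-trivial isotypic part $[\gt g(0)_e,\gt g(2)_e]$, and the non-vanishing of $\hat f$ on $\gt g(1)_e$ then forces $\gt c$ to meet $\gt g(2)_e^{\gt g(0)_e}$ non-trivially. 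Turning this qualitative guarantee into the precise membership of each $e_c$ and each rank-one cross vector is exactly what the explicit computations above accomplish, and is the main thing to get right.
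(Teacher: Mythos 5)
Your argument is correct in outline but takes a genuinely different route from the paper. The paper does not decompose $\gt g(2)_e$ into $\gt g(0)_e$-isotypic pieces at all: it works inside $\hat{\gt g}=\gt{gl}(\VV)$, uses the parity of adjacent Jordan block sizes to show that for non-commuting $\xi_i^{j,s},\xi_a^{i,t}\in\hat{\gt g}(1)_e$ one may replace $\xi_i^{j,s}$ by its $\gt g$-symmetrization without changing the bracket, deduces $[\gt g(1)_e,\hat{\gt g}(1)_e]=[\hat{\gt g}(1)_e,\hat{\gt g}(1)_e]=\hat{\gt g}(2)_e$ (the last equality being the known $\gt{gl}(\VV)$ statement for reachable elements), and then reads off $[\gt g(1)_e,\gt g(1)_e]=\gt g(2)_e$ by projecting along $\hat{\gt g}_e=\gt g_e\oplus\gt m_e$. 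That reduction sidesteps exactly the case analysis your approach must carry out. What your route buys is self-containedness (you only use Lemmas~\ref{f}, \ref{1-block}, \ref{2-blocks}, \ref{2-block-2} and the computations already present in the proof of Theorem~\ref{equi-cl}, rather than the $\gt{sl}(\VV)$ result of Panyushev), and your treatment of the invariant constituents is sound: the inductive use of Lemma~\ref{2-block-2} does land every $e_c$ in $[\gt g(1)_e,\gt g(1)_e]$, and your observation that in the identity $x(0)=[z,y]$ both $y$ and $z$ have $\ad h$-eigenvalue $1$ precisely when the two sizes differ by $2$ is correct and is the right check.

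The soft spot is your handling of the non-trivial constituents. First, the claim that each constituent is ``irreducible of multiplicity one'' is false in general: $\gt g(0)_e$ may have $\gt{so}_2$-factors for reachable $e$ (e.g.\ the partition $(2^2,1^2)$ in $\gt{sp}_6$, where the same-size piece for $c=2$ is $S^2\ff^2$ under $\gt{so}_2$ and its non-trivial part splits into two non-isomorphic one-dimensional weight spaces), so a single witness generates the whole constituent only after you check that it has non-zero projection to each summand; this happens to be true for the off-diagonal witness, but it must be said. Second, your witnessing brackets are computed with the $\hat{\gt g}$-basis vectors $\xi_p^{a,1}$, $\xi_b^{p,0}$, which do not themselves lie in $\gt g(1)_e$; the actual elements of $\gt g(1)_e$ carry mirror terms $\esi(i,j,s)\xi_{j'}^{i',s'}$, and the bracket of two symmetrized elements acquires correction terms that can in principle cancel the expected output (for instance when $b=a'$ and $r_c=2$ for an $\gt{sp}_2$-factor, where fortunately the non-trivial part is zero anyway). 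These corrections do work out, essentially by the same parity-of-block-sizes argument the paper uses, but ``read in the symmetrised basis'' is not a proof of that, and it is precisely the point where the symplectic/orthogonal case differs from $\gt{gl}(\VV)$. As written the argument therefore has a gap at this step, though one that is repairable without changing your strategy.
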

\begin{proof}
Since $e$ is reachable, it has 
Jordan blocks of sizes $(d,d-1,\ldots,1)$ (with positive 
multiplicities). 
Recall that 
$\gt g$ is a symmetric subalgebra of $\hat{\gt g}=\gt{gl}(\VV)$,
see Section~\ref{basis} for more details. 
In other words,  $\hat{\gt g}=\gt g\oplus\gt m$ is 
a $\mathbb Z_2$-grading and $\hat{\gt g}_e=\gt g_e\oplus\gt m_e$.
Suppose $\xi_i^{j,s},\xi_a^{b,t}$ 
are non-commuting elements of $\hat{\gt g}(1)_e$. 
Commutator relation~(\ref{commutator}) implies that  either $j=a$ or $i=b$. 
Without loss of generality 
we may (and will) assume that $i=b$. 
Let $d_i$ be the size of the $i$th Jordan block and 
$d_j$ the size of the $j$th Jordan block. 
By (\ref{ad-h}), $d_i$ and $d_j$
have different parity and the same holds for Jordan blocks with numbers 
$i$ and $a$. 
Take an element $x\in\gt g(1)_e$ such that 
$x=\xi_i^{j,s}+\esi(i,j,s)\xi_{j'}^{i',s'}$.
Because of the parity conditions $j'\ne i$ and $i'\ne a$.
Therefore $[x,\xi_a^{i,t}]=[\xi_i^{j,s},\xi_a^{i,t}]$.
It follows that 
$$
[\gt g(1)_e,\hat{\gt g}(1)_e]=[\hat{\gt g}(1)_e,\hat{\gt g}(1)_e]=
  \hat{\gt g}(2)_2=\gt g(2)_e\oplus\gt m(2)_e.
$$  
To conclude the proof note that 
$\hat{\gt g}(1)_e=\gt g(1)_e\oplus\gt m(1)_e$ and 
$[\gt m(1)_e,\gt g(1)_e]\subset\gt m(2)_e$. 
Hence $[\gt g(1)_e,\gt g(1)_e]=\gt g(2)_e$.
\end{proof}

\begin{lm}\label{g(i)} 
Suppose that $\gt g$ is either $\gt{sp}(\VV)$ or 
$\gt{so}(\VV)$ and $e\in\gt g$ is  reachable. 
Then $[\gt g(1)_e,\gt g(\lambda)_e]=\gt g(\lambda+1)_e$ for all 
$\lambda\ge 0$. 
\end{lm}
\begin{proof}
If $\lambda$ is odd, the proof of Lemma~\ref{g2} goes 
practically without changes.
Let $\xi_i^{j,s},\xi_a^{b,t}$ 
be non-commuting elements of $\hat{\gt g}(1)_e$ and 
$\gt g(\lambda)_e$, without specifying which vector lies 
in what subspace. Assuming $\lambda$ is odd, one can say that 
the sizes of the $i$th  and $j$th Jordan blocks are of different parity and 
the sizes of the $a$th and $b$th Jordan blocks are also of different parity. 
Since $\xi_i^{j,s}$ and $\xi_a^{b,t}$  do not commute, using (\ref{commutator}) we get that 
either $j=a$ or $i=b$
and 
one still may assume that $i=b$. Proceeding as in the proof of
Lemma~\ref{g2}, we get
$$
[\gt g(1)_e,\hat{\gt g}(\lambda)_e]+[\hat{\gt g}(1)_e,\gt g(\lambda)_e]=
\hat{\gt g}(\lambda)_e.
$$
Now notice that 
$\hat{\gt g}(\lambda)_e=\gt g(\lambda)_e\oplus\gt m(\lambda)_e$
for all $\lambda$
and $[\gt g,\gt m]\subset\gt m$.

Suppose now that $\lambda$ is even and 
$\hat x=\xi_i^{j,s}\in\hat{\gt g}(1)_e$, 
$y=\xi_a^{b,t}\in\hat{\gt g}(\lambda)_e$ are non-commuting 
elements. According to (\ref{ad-h}), sizes of Jordan blocks with numbers 
$a$ and $b$ are of the same parity. In particular, 
if $i\in\{a,b\}$, then $j,j'\not\in\{a,b,a',b'\}$ and 
if $j\in\{a,b\}$, then $i,i'\not\in\{a,b,a',b'\}$. 
Take again $x=\hat x+\esi(i,j,s)\xi_{j'}^{i',s'}$.
Then $[\hat x,y]=[x,y]$ apart from two 
cases $\{i,i'\}=\{a,b\}$ and $\{j,j'\}=\{a,b\}$,
where $[\hat x,y]$ is either 
$\xi_{i'}^{j,s+t}$ or $-\xi_i^{j',s+t}$. 
Let $\gt a\subset\hat{\gt g}_e$ be a subspace generated 
by all $\xi_i^{j,t}$ such that $|d_i-d_j|=1$. We have shown that 
$\gt g(\lambda+1)_e\subset [\gt g(1)_e,\gt g(\lambda)_e]+\gt a+\gt m_e$. 
Note that $\gt a=(\gt a\cap\gt g)\oplus(\gt a\cap\gt m)$. 
Hence it remains to prove that 
$\gt a\cap\gt g(\lambda+1)_e\subset [\gt g(1)_e,\gt g(\lambda)_e]$.

Suppose $|d_i-d_j|=1$ and 
let $x(s)=\xi_i^{j,d_j-s}+\esi(i,j,s)\xi_{j'}^{i',s'}$ be 
an element of $\gt g_e$. Set $D:=\min(d_i,d_j)$. 
Then $x(D)\in\gt g(1)_e$ and, by (\ref{ad-h}), 
$[h,x(s)]=(1+2(D-s))x(s)$. 
Our goal is to show that each $x(s)$ 
with $2(D-s)=\lambda$ lies in $[\gt g(1)_e,\gt g(\lambda)_e]$.
This will complete the proof.  

Since the sizes of the $i$th and $j$th Jordan blocks are of different parity,
either $i'=i$ or $j'=j$. Without loss of generality, 
we may (and will) assume that $i=i'$. Then necessary $j\ne j'$. 
Assume first that $t=D-s$ is odd. Then $\xi_i^{i,t}\in\gt g_e$.
Using~(\ref{commutator}), it is straightforward to compute 
that $[x(D),\xi_i^{i,t}]=x(s)$. Here $\xi_i^{i,t}\in\gt g(\lambda)_e$
as required. Assume now that $t$ is even. Then 
$y=\xi_j^{j,t}-\xi_{j'}^{j',t}\in\gt g(\lambda)_e$ and again 
$x(s)=[y,x(D)]$.  
\end{proof}

\begin{rmk}
It is also possible to verify 
the equality $\gt g(\lambda+1)_e=[\gt g(\lambda)_e,\gt g(1)_e]$
directly, by writing down bases of $\gt g(\lambda)_e$, $\gt g(1)_e$ and 
computing the commutators. 
\end{rmk}

\section{Rigid nilpotent elements}

The irreducible components of the quasi-affine varieties 
$\gt g^{(m)}=\{\xi\in\gt g\mid \dim\gt g_\xi=m\}$ are called the 
sheets of $\gt g$. Their description was obtained by 
Borho and Kraft \cite{Borho}, \cite{BrKr}. One of the main 
results is that each sheet contains exactly  one nilpotent orbit. 
Nilpotent orbits, which coincide with sheets, are said to be rigid, 
and all their elements are said to be rigid as well. 
In the classical Lie algebras the classification of rigid nilpotent
elements was obtained by Kempken \cite[Subsection~3.3]{gisela}. 
(Note that rigid orbits are called {\it original} in  \cite{gisela}.) 
Namely, an element $e$ 
of $\gt{so}(\VV)$ or $\gt{sp}(\VV)$ is rigid 
if and only if it is given by a partition  
$(d^{r_d},(d-1)^{r_{d-1}},\ldots,1^{r_1})$ with 
all multiplicities $r_i$ being positive  and  
\begin{itemize}
\item \ if $\gt g=\gt{so}(\VV)$ and $d_i$ is odd, then $r_i\ne 2$;
\item \ if $\gt g=\gt{sp}(\VV)$ and $d_i$ is even, then $r_i\ne 2$.
\end{itemize}
In view of \cite[Sect.~3]{ja}, the last two conditions mean that 
$\gt g(0)_e$ has no factors isomorphic to $\gt{so}_2$ and therefore 
is semisimple.

\begin{prop}\label{rigid-1} 
Let $\gt g$ be a semisimple Lie algebra and 
$x\in\gt g$ such that $\gt g_x=[\gt g_x,\gt g_x]$.
Then $x$ is a rigid nilpotent element.
\end{prop}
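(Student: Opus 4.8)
The plan is to prove the contrapositive in two stages: first that the hypothesis forces $x$ to be nilpotent, and then, writing $x=e$, that a non-rigid nilpotent $e$ must satisfy $\gt g_e\neq[\gt g_e,\gt g_e]$. The argument will work for an arbitrary semisimple $\gt g$, matching the generality of the statement.

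For nilpotency, let $x=s+n$ be the Jordan decomposition, with $s$ semisimple, $n$ nilpotent and $[s,n]=0$. Both $s$ and $n$ lie in $\gt g_x$, and since $\ad s,\ad n$ are polynomials in $\ad x$ they commute with every element of $\gt g_x$; thus $s$ is a semisimple element of the centre of $\gt g_x$. A nonzero semisimple central element of a finite-dimensional Lie algebra cannot belong to its derived algebra: choosing a Levi decomposition $\gt g_x=\gt r\ltimes\gt u$ one has $s\in\gt z(\gt r)$, while $[\gt g_x,\gt g_x]\cap\gt r\subseteq[\gt r,\gt r]$ and $\gt z(\gt r)\cap[\gt r,\gt r]=0$. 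Hence $\gt g_x=[\gt g_x,\gt g_x]$ forces $s=0$, so $x=e$ is nilpotent. (This already shows that the reductive part $\gt g(0)_e$ of $\gt g_e$ is semisimple, but that alone is not enough for rigidity, as the regular nilpotent element shows, so the second stage is the substantial one.)

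For rigidity I would argue by contradiction. Set $m=\dim\gt g_e$ and let $S$ be the sheet, i.e.\ the irreducible component of $\gt g^{(m)}$, containing $e$. Every element of $\gt g^{(m)}$ has orbit of the fixed dimension $\dim\gt g-m$, so if $e$ were not rigid we would have $\dim S>\dim\gt g-m$; since a sheet contains a unique nilpotent orbit (Borho--Kraft), its generic element $y$ would then be non-nilpotent. Writing $y=s_y+n_y$ with $s_y\neq0$ semisimple, the element $s_y$ is again a nonzero semisimple element of the centre of $\gt g_y$, so by the computation of the first stage $\dim\bigl(\gt g_y/[\gt g_y,\gt g_y]\bigr)>0$.

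It then remains to propagate this inequality from the generic point $y$ to the special point $e$, and this semicontinuity step is the one genuinely non-formal ingredient. Over $\gt g^{(m)}$ the assignment $\xi\mapsto\gt g_\xi=\Ker(\ad\xi)$ is a rank-$m$ vector bundle, because $\ad\xi$ has constant rank $\dim\gt g-m$ there; the fibrewise commutator is a morphism of vector bundles, so $\xi\mapsto\dim[\gt g_\xi,\gt g_\xi]$ is lower semicontinuous and $\xi\mapsto\dim\bigl(\gt g_\xi/[\gt g_\xi,\gt g_\xi]\bigr)$ is upper semicontinuous on the irreducible variety $S$. Consequently $\dim\bigl(\gt g_e/[\gt g_e,\gt g_e]\bigr)\ge\dim\bigl(\gt g_y/[\gt g_y,\gt g_y]\bigr)>0$ for generic $y\in S$, contradicting $\gt g_e=[\gt g_e,\gt g_e]$. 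Hence $e$ is rigid. Both ingredients used here, the rank-of-a-bundle-map semicontinuity and the uniqueness of the nilpotent orbit in a sheet, are standard, which is why the whole argument is, as claimed, almost trivial.
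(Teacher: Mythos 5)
Your proof is correct and follows essentially the same route as the paper's: the paper likewise picks a sheet $S$ properly containing the orbit of $e$, notes that its generic elements are non-nilpotent (hence satisfy $[\gt g_y,\gt g_y]\ne\gt g_y$), and degenerates to $e$ using the fact that the dimension of the commutant cannot increase in the limit. You only make explicit two points the paper leaves implicit, namely that $\gt g_x=[\gt g_x,\gt g_x]$ forces $x$ to be nilpotent (dismissed as evident in the introduction) and the precise semicontinuity mechanism behind the limiting step.
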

\begin{proof}
Assume that $e$ is not rigid. Then any sheet $S$,
containing $e$, contains also non-nilpotent elements and 
they form a non-empty open subset in $S$. In particular, there 
is a curve $x:\ff\to S$ such that 
$\dim\gt g_{x(t)}=\dim\gt g_e$ for all $t\in\ff$, $\lim_{t\to 0}x(t)=e$, and
$x(t)$ is not nilpotent for all $t\ne 0$.
Thereby 
$[\gt g_{x(t)},\gt g_{x(t)}]\ne \gt g_{x(t)}$ for all 
non-zero values of $t$. In the limit, dimension of 
the commutant cannot increase. 
Hence $\dim[\gt g_e,\gt g_e]<\dim\gt g_e$. This contradiction 
completes the proof.  
\end{proof}

\begin{thm}\label{rigid-cl}
Let $\gt g$ be a simpe classical Lie 
algebra and $e\in\gt g$ a nilpotent element.
Then $\gt g_e=[\gt g_e,\gt g_e]$ 
if and only if $e$ is rigid. 
\end{thm}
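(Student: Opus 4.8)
The plan is to prove Theorem~\ref{rigid-cl} by combining the rigidity classification recalled just before the statement with the equivalence of conditions ({\sf i}) and ({\sf ii}) established in Theorem~\ref{equi-cl}. Proposition~\ref{rigid-1} already gives one direction: if $\gt g_e=[\gt g_e,\gt g_e]$, then $e$ is rigid. So I only need the converse, namely that every rigid nilpotent element $e$ satisfies $\gt g_e=[\gt g_e,\gt g_e]$.

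First I would dispose of $\gt g=\gt{sl}(\VV)$. There the only rigid nilpotent orbit is the zero orbit, for which $\gt g_e=\gt g$ is not perfect unless $\gt g=0$; but in type $A$ the regular orbit is the only orbit with semisimple $\gt g(0)_e$, and it is not rigid, so the statement reduces to the observation that there are no nontrivial rigid elements to check. (This matches the analysis in the proof of Theorem~\ref{equi-cl}.) The substance of the theorem is therefore in the orthogonal and symplectic cases.

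For $\gt g=\gt{so}(\VV)$ or $\gt{sp}(\VV)$, my strategy is to show that a rigid $e$ satisfies condition ({\sf i}), and then invoke Theorem~\ref{equi-cl} to conclude ({\sf ii}), which is exactly the desired equality. By the rigidity classification recalled above, a rigid $e$ is given by a partition $(d^{r_d},\ldots,1^{r_1})$ with \emph{all} multiplicities $r_i$ positive; this is precisely the combinatorial shape $(d,d-1,\ldots,1)$ that appears in \cite[Theorem~2.1.(4)]{Dima-reach} as the characterization of reachable nilpotent elements in the classical case. Hence a rigid $e$ is reachable, giving the first half of ({\sf i}). For the second half, the text immediately following the classification observes that the extra parity conditions on the $r_i$ (no $r_i=2$ for odd block-sizes in the orthogonal case, none for even block-sizes in the symplectic case) are equivalent, via \cite[Sect.~3]{ja}, to $\gt g(0)_e$ having no $\gt{so}_2$-factor and therefore being semisimple. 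Thus a rigid $e$ satisfies both clauses of ({\sf i}).

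With ({\sf i}) in hand, Theorem~\ref{equi-cl} yields ({\sf ii}), i.e. $\gt g_e=[\gt g_e,\gt g_e]$, completing the converse and hence the theorem. The only real content beyond bookkeeping is the translation between the rigidity conditions of Kempken and the two hypotheses of ({\sf i}), and I expect this matching to be the main (though not deep) obstacle: one must check that the reachability criterion of \cite{Dima-reach} and the semisimplicity-of-$\gt g(0)_e$ condition together are \emph{exactly} Kempken's list, with no rigid orbit left unaccounted for and no non-rigid orbit erroneously included. Since both translations are already quoted in the excerpt, the proof itself is short: it is essentially the assembly of Proposition~\ref{rigid-1}, the classification, and Theorem~\ref{equi-cl}.
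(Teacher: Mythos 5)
Your proposal follows exactly the paper's route: Proposition~\ref{rigid-1} for the forward direction, and for the converse the matching of Kempken's rigidity classification with Panyushev's reachability criterion plus the semisimplicity of $\gt g(0)_e$, feeding condition ({\sf i}) into Theorem~\ref{equi-cl}. One slip in your type-$A$ discussion: for the zero (and only rigid) orbit in $\gt{sl}(\VV)$ the centraliser is $\gt g$ itself, which \emph{is} perfect since $\gt g$ is simple --- that is why the equality holds trivially there; your claim that it ``is not perfect'' is backwards and, taken literally, would contradict the theorem. The rest is correct and identical in substance to the paper's argument.
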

\begin{proof}
Due to Proposition~\ref{rigid-1}, we need to prove 
only one implication. Assume that $e$ is rigid. 
In type $A$ this means that $e=0$ and there is nothing 
to prove. 

Suppose that $\gt g$ is either $\gt{sp}(\VV)$ or $\gt{so}(\VV)$. 
Comparing descriptions of Kempken \cite{gisela}, reproduced above,
and of Panyushev \cite[Theorem~2.1.(4)]{Dima-reach}, we conclude that $e$ is reachable.
It follows from the Kempken's results, that $\gt g(0)_e$ is semisimple. 
Therefore we have: $e$ is reachable and $\gt g(0)_e$ is semisimple.
This is exactly condition $({\sf i})$, and the equality
$\gt g_e=[\gt g_e,\gt g_e]$ holds by  
Theorem~\ref{equi-cl}.
\end{proof}

\begin{rmk}\label{G2}
In exceptional Lie algebras there are rigid nilpotent 
elements such that $e\not\in[\gt g_e,\gt g_e]$. 
The simplest example is provided by a short root 
vector $e$ in the simple Lie algebra of type $G_2$.
Since $\dim(\gt g_e)=6$,  this element is rigid.
Here 
$\gt g(1)_e=0$ and therefore $e$ is not reachable.  
\end{rmk}

\vskip0.3ex

\noindent
{\bf Acknowledgements.}  
 Final part of this work  was carried out during 
my stay at the Max-Planck-Institut f\"ur Mathematik (Bonn). 
I am grateful to this institution for warm hospitality and support.

\end{document}